 \newcommand{\cF}{\mathcal{F}}
 \DeclareMathOperator{\cone}{cone}
 \DeclareMathOperator{\Int}{int}
 \newcommand{\ip}[2]{\ensuremath{\langle #1,#2\rangle}}
 \DeclareMathOperator{\mB}{\mathbb{B}}
    \newcommand{\mN}{\mathcal{N}}
 \newcommand{\mP}{\mathfrak{P}}
 \DeclareMathOperator{\Prob}{Prob}
 \newcommand{\Rset}{\mathbb{R}}
\newcommand{\w}{\omega}
\newtheorem{assumption}{Assumption}
 \title{Symmetric confidence regions and confidence intervals for normal map formulations of \\ stochastic variational inequalities}
\author{Shu Lu\thanks{Department of Statistics and Operations
Research, University of North Carolina at Chapel Hill, 355 Hanes
Building, CB\#3260, Chapel Hill, NC 27599-3260}
({\tt shulu@email.unc.edu}).}
\begin{document}

\maketitle

\begin{abstract}
Stochastic variational inequalities (SVI) model a large class of equilibrium problems subject to data uncertainty, and are closely related to stochastic optimization problems. The SVI solution is usually estimated by a solution to a sample average approximation (SAA) problem. This paper considers the normal map formulation of an SVI, and proposes a method
 to build asymptotically exact confidence regions and confidence intervals for the solution of the normal map formulation, based on the asymptotic distribution of SAA solutions. 
The confidence regions are single ellipsoids with high probability. We also discuss the computation of simultaneous and individual confidence intervals.
\end{abstract}

\begin{keywords}
confidence region, confidence interval, stochastic variational inequality, sample average approximation, stochastic optimization, normal map
\end{keywords}

\section{Introduction}\label{s:intro}

This paper considers a stochastic variational inequality (SVI), in which the function that defines the variational inequality is an expectation function. Let $(\Omega, \cF, P)$ be a probability space, and $\xi$ be a random vector that is defined on $\Omega$ and supported on a closed subset $\Xi$ of $\Rset^d$. Let $O$ be an open subset of $\Rset^q$, and $F$ be a measurable function from $O\times \Xi$ to $\Rset^q$, such that for each $x\in O$ the expectation $f_0(x)=E[F(x,\xi)]$ is well defined.
Let $S$ be a polyhedral convex set in $\Rset^q$.
The SVI problem is to find a point $x\in S\cap O$ such that
\begin{equation}\label{eqn:svi}
 0\in f_0(x) + N_S(x),
\end{equation}
where $N_S(x)\subset \Rset^q$ denotes the normal cone to $S$ at $x$:
\[N_S(x)=\{v\in \Rset^q \mid \ip{v}{s-x}\le 0 \text{ for each } s\in S\}.\]
We use $\ip{\cdot}{\cdot}$ to denote the scalar product of two vectors of the same dimension. Here and elsewhere, the symbol $\subset$ stands for set inclusion and allows the two sets compared to coincide.

The function $f_0$ defined above is a function from $O$ to $\Rset^q$. We are interested in situations in which $f_0$ does not have a closed form expression and is approximated by a sample average function.
Let $\xi^1, \cdots, \xi^n$ be independent and identically distributed (i.i.d.) random variables with
distribution same as that of $\xi$. Define the sample average function $f_n: O\times \Omega \to \Rset^q$ by
\begin{equation}\label{eqn:def_fN}
f_n (x, \w)= n^{-1} \sum_{i=1}^n F(x,\xi^i(\w)).
\end{equation}
The sample average approximation (SAA) problem is to find a point $x\in S\cap O$ such that
\begin{equation}\label{eqn:saa}
0 \in f_n (x, \w) + N_S (x).
\end{equation}

In the rest of this paper, we will write $f_n(x,\w)$ as $f_n(x)$ when clear from the context. We will consider the \emph{normal map} formulations (to be defined below) of both \eqref{eqn:svi} and \eqref{eqn:saa}. The major objective is to develop a method to build confidence regions and confidence intervals for the solution of the normal map formulation of \eqref{eqn:svi}.
We will explain how to obtain confidence regions and intervals for the solution of \eqref{eqn:svi} in Section \ref{s:num}. For brevity, we refer to a solution to \eqref{eqn:svi} or its normal map formulation as a true solution, and a solution to \eqref{eqn:saa} or its normal map formulation as an SAA solution.

Under certain regularity conditions, the SAA solutions almost surely converge to a true solution as the sample size $n$ goes to infinity; see G\"{u}rkan, \"{O}zge and Robinson \cite{gur.ozg.smr:sps}, King and Rockafellar \cite{kin.rtr:ats}, and Shapiro, Dentcheva and Ruszczy\'{n}ski \cite[Section 5.2.1]{sha.den.rus:sp}. \cite[Theorem 2.7]{kin.rtr:ats} and \cite[Section 5.2.2]{sha.den.rus:sp} provided the asymptotic distribution of SAA solutions. Xu \cite{xu:saa} showed that the SAA solutions converge to the set of true solutions in probability at an exponential rate under some assumptions on the moment generating functions of certain random variables; see \cite{sha.xu:smp} for related results on the exponential convergence rate. For work on stability of stochastic optimization problems, see \cite{den:dst,dup.wets:abs,rom:ssp,sha:abo} and numerous references therein.

As discussed in Pflug \cite{pfl:sos}, there are two basic approaches to constructing confidence regions for the true solution of a stochastic optimization problem. The first approach is based on the asymptotic distribution of SAA solutions, and the second is based on properties about boundedness in probability with known tail behavior. The latter approach was used in \cite{pfl:sos} to construct universal confidence sets for the true solution, among other results. Vogel \cite{vog:ucs} continued that approach and developed a general method for universal confidence sets, taking account for random feasible sets and nonunique optimal solutions.

The method in this paper belongs to the first approach in the above classification. Our goal is to construct asymptotically exact confidence regions that are convenient to compute along with confidence intervals, under assumptions that guarantee the SVI and its SAA approximations to have locally unique solutions.
Our work is closely related to Demir \cite{dem:acr}. In that dissertation, Demir considered the normal map formulation of the SVI and obtained in \cite[Equation (36)]{dem:acr} an expression for confidence regions of the true solution. That expression depended on quantities not directly computable. \cite[Theorem 3.16]{dem:acr} modified it, resulting in a formula for the set called $S^n$ in \cite{dem:acr}. That formula looks similar to \eqref{eqn:conf_reg_nonsingular} of the present paper, but it uses $d(f_0)_S(z_n)(z_n-z)$ (in our notation) instead of $d(f_n)_S(z_n)(z-z_n)$ in \eqref{eqn:conf_reg_nonsingular}.
   We show in this paper that the set \eqref{eqn:conf_reg_nonsingular} is an asymptotically exact confidence region of the true solution, by proving that the probability for it to contain the true solution converges to the prescribed level of confidence.
 The development in \cite{dem:acr} did not justify its method with an asymptotic exactness result, aside from using some restrictive assumptions.


The starting point of the asymptotic analysis in this paper is Theorem \ref{t:asy_dis}, which is proved in \cite{lu.bud:crsvi} and related to results in \cite{dem:acr,kin.rtr:ats,sha.den.rus:sp}. 
  Equations \eqref{eqn:zN_dist_cov} and \eqref{eqn:zN_dist_cov2} in that theorem describe the asymptotic distribution of the solution (denoted by $z_n$) to the normal map formulation of the SAA problem \eqref{eqn:saa} in terms of a piecewise linear function $L_K$ and a normal random vector $Y_0$. Because $L_K$ depends on the true solution $z_0$ and is unknown before $z_0$ is found, we need to replace it by a suitable estimator in order to establish a confidence region for $z_0$. For the general situations considered in this paper, the dependence of $L_K$ on the location of $z_0$ is \emph{discontinuous}, due to the nonsmooth structure of $S$. Such discontinuity is a major issue to be addressed for establishing computable confidence regions, and this paper handles this issue using a different approach from those in \cite{lu:nmb,lu.bud:crsvi}.

In \cite{lu:nmb,lu.bud:crsvi}, $L_K$ is replaced by estimators designed to converge to $L_K$ in probability. The consistency of those estimators relies on the exponential convergence rate of $z_n$ to $z_0$. When $L_K$ is piecewise linear with multiple pieces, its estimators constructed in \cite{lu:nmb,lu.bud:crsvi} have multiple pieces with high probability, producing asymmetric confidence regions that are factions of ellipsoids pieced together. In the present paper, we use a property of general piecewise affine functions (Theorem \ref{t:dfx}) to show that for large $n$ the two vectors  $-d(f_n)_S(z_n)(z_0-z_n)$  and $L_K(z_n-z_0)$ are close to each other even though $d(f_n)_S(z_n)$ may be a very different function from $L_K$. Accordingly, we can use $-d(f_n)_S(z_n)(z_0-z_n)$ to replace $L_K(z_n-z_0)$ in \eqref{eqn:zN_dist_cov2} without losing the limiting property (Theorem \ref{t:d_f0_S_d_fN_S}). This leads to a computable confidence region for $z_0$ given in \eqref{eqn:conf_reg_nonsingular} or \eqref{eqn:R_n_epsilon} without the need of constructing a consistent estimator for $L_K$. The main advantage with the new method comes from Proposition \ref{p:invertible}, which implies that the confidence region built from this method is with high probability a single ellipsoid (or can be approximated by a degenerate ellipsoid in the singular case). This brings much efficiency for describing confidence regions or computing simultaneous confidence intervals. Another advantage of the new method is that it does not rely on the exponential convergence rate of $z_n$. Lastly, we mention that with this method the confidence region obtained from a particular $z_n$ may be very different from that obtained from another $z_n$, since $d(f_n)_S(z_n)$ can be very different  for different $z_n$. This does not conflict with the asymptotic exactness of the confidence region.

Given the results on confidence regions and simultaneous confidence intervals, a natural question is whether individual confidence intervals for components of $z_0$ can be computed in a parallel manner, by using $d(f_n)_S(z_n)$ to replace $L_K$ in \eqref{eqn:zN_dist_cov}. Since $d(f_n)_S(z_n)$ is an invertible linear function with high probability, the individual confidence interval given by this approach can be expressed by a closed-form formula \eqref{eqn:ind_ci_conv_formula}. Theorem \ref{t:ind_ci} shows that the probability for that interval to contain the $j$th component of $z_0$ converges to a quantity related to the random variable $\Gamma = (L_K)^{-1}(Y_0)$, and that quantity equals the desired confidence level $1-\alpha$ when the condition in \eqref{eqn:prob_intersection} holds. Based on that convergence result and given the simple format of \eqref{eqn:ind_ci_conv_formula}, one can use \eqref{eqn:ind_ci_conv_formula} as an approximative confidence interval for $(z_0)_j$ to compare with confidence intervals obtained from other methods (such as methods developed in \cite{lam.lu.bud:ici} to compute asymptotically exact individual confidence intervals using estimators in \cite{lu:nmb,lu.bud:crsvi}), which generally require more computation when $L_K$ has more than two pieces.


Examples of stochastic variational inequalities of the form \eqref{eqn:svi} include stochastic Nash equilibrium problems in which players' cost functions are expected values of certain random variables, such as the energy market problem studied in \cite{gur.ozg.smr:sps,hau.zac.leg.sme:sdn}.
Stochastic variational inequalities also arise as first-order conditions of optimization problems whose objective functions are expectations. If the exact values of some coefficients of the objective function are unknown, and are estimated using sample data average, then the solution obtained for such a problem is essentially an SAA solution. The method of this paper can be applied to such problems to provide a quantitative measure on the effect of
sample variations on solutions obtained for those problems. We have applied this method to a type of statistical learning problems \cite{lu.liu:cri}.

 Below we briefly introduce the normal map formulation of variational inequalities.
 The normal map induced by the function $f_0:O\to \Rset^q$ and the polyhedron $S\subset \Rset^q$ is defined to be a function $(f_0)_S: \Pi_S^{-1}(O)\to \Rset^q$, with
\begin{equation}\label{eqn:def_nm}
(f_0)_S(z)=f_0(\Pi_S(z))+ (z-\Pi_S(z))  \text{ for each }z\in \Pi_S^{-1}(O),
\end{equation}
 where $\Pi_S(z)$  denotes the Euclidean projection of $z$ on $S$, and $\Pi_S^{-1}(O)$ is the set of points $z\in\Rset^q$ such that $\Pi_S(z)\in O$. If a point $x\in S\cap O$ satisfies \eqref{eqn:svi}, then the point $z=x-f_0(x)$ satisfies $\Pi_S(z)=x$ and
\begin{equation}\label{eqn:vi_nm}
(f_0)_S(z) = 0.
\end{equation} Conversely, if $z$ satisfies
\eqref{eqn:vi_nm}, then $x=\Pi_{S}(z)$ satisfies $x-f_0(x)=z$ and solves \eqref{eqn:svi}. Thus, equation \eqref{eqn:vi_nm} is an equivalent formulation for \eqref{eqn:svi}, and is referred to as the normal map formulation of \eqref{eqn:svi}.

In general, for any function $g$ from (a subset of) $\Rset^q$ to $\Rset^q$ and any closed and convex set $C$ in $\Rset^q$, one can define the normal map induced by $g$ and $C$, denoted by $g_C$, in the same way as \eqref{eqn:def_nm} with $g$ in place of $f$ and $C$ in place of $S$. For example, the normal map induced by the sample average function $f_n$ and the set $S$ is a function $(f_n)_S:O\to \Rset^q$ defined as
\[
(f_n)_S(z)=f_n(\Pi_S(z))+ (z-\Pi_S(z)) \text{ for each } z\in \Pi_S^{-1}(O).
\]
The following is the normal map formulation for the SAA problem \eqref{eqn:saa}:
\begin{equation}\label{eqn:saa_z}
(f_n)_S(z)=0.
\end{equation}
Equation \eqref{eqn:saa_z} is related to \eqref{eqn:saa} in the same way as \eqref{eqn:vi_nm} is to \eqref{eqn:svi}.

The above definition for the normal map is from Robinson \cite{smr:nmi,smr:sav}. The normal map concept is closely related to the Minty parametrization \cite{min:mno}, in the sense that the variable $z$ can be considered as the parameter in the Minty parametrization of the graph of $N_S$: the mapping $z \to (\Pi_S(z), z-\Pi_S(z))$ is one-to-one from $\Rset^q$ onto the graph of $N_S$, and the equation \eqref{eqn:vi_nm} is a reformulation of \eqref{eqn:svi} in terms of $z$ using that parametrization.

Because $S$ is a polyhedral convex set by assumption, the Euclidean projector $\Pi_S$ is a piecewise affine function (see Section \ref{s:pa} below for the precise definition of piecewise affine functions). It coincides with an affine function on each of a family of finitely many $q$-dimensional polyhedral convex sets. This family is called the normal manifold of $S$, and each set in this family is called an $q$-cell, where the symbol $q$ refers to the dimension of those sets. The union of all the $q$-cells is $\Rset^q$. Any two distinct $q$-cells are either disjoint, or meet at a common proper face of them. (A face of a convex set $P$ in $\Rset^q$ is defined to be a convex
subset $F$ of $P$ such that if $x_1$ and $x_2$ belong to $P$ and
$\lambda x_1 + (1-\lambda) x_2 \in F$ for some $\lambda \in (0,1)$,
then $x_1$ and $x_2$ actually belong to $F$, see, e.g., \cite{rtr:ca}. $F$ is a proper face of $P$, if it is a nonempty face of $P$ and is not $P$ itself.) For detailed discussions on the normal manifold and properties of piecewise affine functions, see \cite{ral:npr,ral:bnn,smr:nmi,sch:ipd}.

To illustrate the normal manifold concept, consider the example when $S=\Rset^q_+$,  the nonnegative orthant in $\Rset^q$. For this example, the projector $\Pi_S$ coincides with an affine function when restricted to each fixed orthant of $\Rset^q$, and that affine function is different for a different orthant: for example we have $\Pi_S(z)=z$ for points $z\in \Rset^q_+$, $\Pi_S(z)=0$ for $z\in \Rset^q_-$, and $\Pi_S(z)=(z_1,0,\cdots,0)$ for $z\in \Rset_+ \times \Rset^{q-1}_-$, so $\Pi_S$ coincides with the identity function, the zero function, or the projector onto the $z_1$ axis, when restricted on $\Rset^q_+$, $\Rset^q_-$ or $\Rset_+ \times \Rset^{q-1}_-$ respectively. Accordingly, the normal manifold of $\Rset^q_+$ is the family of all orthants in $\Rset^q$.

Below we introduce some terminology and notation. 
A subset $K$ of $\Rset^q$ is called a cone if $\mu x\in K$ whenever $x\in K$ and $\mu$ is a positive real number. For a set $C\subset \Rset^q$, $\Int C$ denotes its interior, and $\cone C$ is the smallest cone that contains it. (This definition for $\cone C$ is equivalent to the one given in \cite[Section 2.1.1]{sch:ipd} for polyhedral convex sets $C$. In this paper we will only apply the definition to such sets.) We use $\|\cdot\|$ to denote the norm of an element in a normed space; unless explicitly stated otherwise, it can be any norm, as long as the same norm is used in all related contexts. We use $\mN(0, \Sigma)$ to denote a Normal random vector with covariance matrix $\Sigma$. Weak convergence of $k$-dimensional random variables $Y_n$ to $Y$ will be denoted as $Y_n \Rightarrow Y$, which means that $Ef(Y_n)$ converges to $Ef(Y)$ for all bounded continuous functional $f$ on $\Rset^k$. Following \cite{smr:ls3}, a locally Lipschitz function $g:\Rset^q\to\Rset^m$ is said to be \emph{B-differentiable} at a point $x_0\in \Rset^q$ if there is a positively homogeneous function $G: \Rset^q \to \Rset^m$, such that
\begin{equation}\label{eqn:def_Bdifferentiability}
g(x_0+v)=g(x_0)+ G(v)+ o(v).
\end{equation}
(Recall that a function $G$ is positively homogeneous, if $G(\lambda v) = \lambda G(v)$ for each nonnegative real number $\lambda$ and each $v\in \Rset^q$.) Such a function $G$ is called the B-derivative of $g$ at $x_0$ and is denoted as $dg(x_0)$. Note that $dg(x_0)(h)$ is exactly the directional derivative of $g$ at $x_0$ along the direction $h$. Indeed, as pointed out in \cite{sha:cdd}, for a locally Lipschitz function in finite-dimensional spaces, B-differentiability (called the directional differentiability in the sense of Fr\'{e}chet in \cite{sha:cdd}) of the function is equivalent to directional differentiability of the function for all directions. If $g$ is differentiable at $x_0$, then the B-derivative $dg(x_0)$ coincides with the standard Fr\'{e}chet derivative.

The rest of this paper is organized as follows. Section \ref{s:pa} is a discussion on general piecewise affine functions. Section \ref{s:asy} establishes the main asymptotic distribution results. Section \ref{s:sim_ci} provides methods to build confidence regions and simultaneous confidence intervals. Section \ref{s:ind_ci} discusses computation of individual confidence intervals. Section \ref{s:num} concludes the paper with numerical examples.

\section{Piecewise affine functions}\label{s:pa}

This section discusses general piecewise affine functions, and proves some properties to be used in subsequent development. The notation in this section is independent of the notation in the rest of this paper.

By definition, a continuous function
$f$ from $\Rset^q$ to $\Rset^m$ is called \emph{piecewise affine} if there exists a finite family of affine functions $f_j:
\Rset^q\to \Rset^m,j=1,\cdots,k$, such that the inclusion
$f(x)\in\{f_1(x),\cdots,f_k(x)\}$ holds for each $x\in \Rset^q$ \cite{sch:ipd}. The affine functions $f_j, \ j=1,\cdots,k$ are called selection functions of $f$. If all $f_j$'s are linear functions then $f$ is called \emph{piecewise linear}.

A closely related concept is the \emph{polyhedral subdivision} \cite{eav.rot:rpp,sch:ipd}. A polyhedral subdivision of $\Rset^q$ is a finite collection of polyhedral convex sets in $\Rset^q$, $\mP=\{P_1,\cdots,P_l\}$, that satisfies the following conditions:
\begin{enumerate}
 \item Each $P_i$ is a polyhedral convex set of dimension $q$.
 \item The union of all $P_i$ is $\Rset^q$.
 \item The intersection of each two $P_i$ and $P_j$, $1\le i\ne j\le l$, is either empty or a common proper face of both $P_i$ and $P_j$.
\end{enumerate}
If each $P_i\in \mP$ is a polyhedral convex cone, then $\mP$ is a \emph{conical subdivision}.
It was shown in \cite[Proposition 2.2.3]{sch:ipd} that for any piecewise affine function $f$ there corresponds a polyhedral subdivision $\mP$ of $\Rset^q$ such that $f$ coincides with an affine function on each $P\in \mP$. If $f$ is piecewise linear, then the corresponding $\mP$ is a conical subdivision. For example, if $S=\Rset^q_+$, then the Euclidean projector $\Pi_S$ is piecewise linear, and the family of all orthants in $\Rset^q$ is the corresponding  conical subdivision. In general, for any polyhedral convex set $S$ the Euclidean projector $\Pi_S$ is piecewise affine, with the normal manifold of $S$ being the corresponding polyhedral subdivision.

In the rest of this section, let $f: \Rset^q\to\Rset^m$ be a piecewise affine function with the corresponding subdivision $\mP$. Clearly, if $f$ is represented by different selection functions on $P_1\in \mP $ and $P_2\in \mP$, and $x\in P_1\cap P_2$, then $f$ is nondifferentiable at $x$. However, it is well known that $f$ is B-differentiable at any point in $\Rset^q$; below we explain a formula for the B-derivative of $f$ at a point $x\in \Rset^q$. For the derivation of this formula see \cite{eav.rot:rpp} and \cite[Proposition 2.2.6]{sch:ipd}. Let
\[
\mP(x)=\{P\in \mP\mid x\in P\}
\]
be the subfamily of $\mP$ that consists of elements in $\mP$ containing $x$. We use $|\mP(x)|$ to denote the union of all $P\in \mP(x)$, called  the \emph{underlying} set of $\mP(x)$ in algebraic topology \cite{mun:eat}. We caution the reader that $|\mP(x)|$ here does not mean the cardinality of $\mP(x)$. It is obvious that $x$ belongs to the interior of $|\mP(x)|$. For each $x\in \Rset^q$, define the following family of polyhedral convex cones:
\[
\mP'(x)=\{\cone(P-x) \mid P\in \mP(x)\}.
\]
The family $\mP'(x)$ is a conical subdivision of $\Rset^q$. The B-derivative $d f(x)$ of $f$ at $x$ is a piecewise linear function from $\Rset^q$ to $\Rset^m$, whose corresponding subdivision is exactly $\mP'(x)$. If $f$ coincides with the affine function $Ax+b$ on the polyhedral convex set $P\in \mP(x)$, then
\begin{equation}\label{eqn:dfx}
d f(x)(h)= A h \text{ for each } h\in \cone(P-x).
\end{equation}
A consequence of \eqref{eqn:dfx} is that the selection functions of $df(x)$ are exactly the linear parts of selection functions of $f$ on elements of $\mP(x)$. In particular, if $x$ is contained in the interior of some $P\in \mP$, then $df(x)$ is a linear map.

While for a fixed point $x\in \Rset^q$ the B-derivative $df(x)$ is a continuous function on $\Rset^q$, the dependence of $df(x)$ on $x$ is discontinuous, because $df(x)$ changes abruptly to a very different function as $x$ moves from the interior of some $P\in \mP$ to its boundary. For example consider $\Pi_S$ with $S=\Rset^q_+$ again. As a piecewise linear function, $\Pi_S$ is B-differentiable at any point $z\in \Rset^q$. At any $z$ in the interior of $\Rset^q_+$, the B-derivative $d\Pi_S(z)$ is the identity map. At any $z$ with $z_1=0$ and $z_i>0$ for $i=2,\cdots,q$, $d\Pi_S(z)$ is a piecewise linear map with two pieces:
\[
\text{For each }h\in \Rset^q, \
d \Pi_S(z)(h)= \left\{
\begin{array}{ll}
h, & \text{ if } h_1\ge 0,\\
(0,h_2,\cdots,h_q), & \text{ if } h_1 \le 0.
\end{array}
\right.
\]

As will become clear, the discontinuity of $df(x)$ with respect to $x$ is a major issue to be addressed to develop methods for confidence regions in this paper and in \cite{lu:nmb,lu.bud:crsvi}. In this paper, we handle this issue by utilizing a ``symmetry'' property between the B-derivatives $df(x)$ and $df(y)$ for two points $x$ and $y$ that belong to a common set in $\mP$, shown in Theorem \ref{t:dfx} below. Theorem \ref{t:dfx} will be used to establish our main result in Theorem \ref{t:d_f0_S_d_fN_S}. The proof of Theorem \ref{t:dfx} uses the following lemma, which gives two equivalent statements for the condition $y\in |\mP(x)|$.

\begin{lemma}\label{l:mPx_mPy}
Let $x$ and $y$ be two points in $\Rset^q$. The following are equivalent.
\begin{enumerate}[(1)]
\item $y\in |\mP(x)|$.
\item $x\in |\mP(y)|$.
\item There exists $P\in \mP$ such that both $x$ and $y$ belong to $P$.
\end{enumerate}
\end{lemma}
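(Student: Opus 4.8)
The plan is to prove the cycle of implications $(3)\Rightarrow(1)$, $(1)\Rightarrow(2)$, and $(2)\Rightarrow(3)$, though by the symmetry of $x$ and $y$ it really suffices to prove $(3)\Rightarrow(1)$ together with, say, $(1)\Rightarrow(3)$; the implication $(2)\Rightarrow(3)$ then follows from $(1)\Rightarrow(3)$ by interchanging the roles of $x$ and $y$, and $(1)\Leftrightarrow(2)$ follows from combining $(1)\Rightarrow(3)$ and $(3)\Rightarrow(2)$. So the genuine content is the equivalence of $(1)$ and $(3)$.

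First I would dispose of $(3)\Rightarrow(1)$, which is nearly immediate: if $P\in\mP$ contains both $x$ and $y$, then $P\in\mP(x)$ by definition of $\mP(x)$, hence $y\in P\subset|\mP(x)|$. The same argument gives $(3)\Rightarrow(2)$.

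The substantive direction is $(1)\Rightarrow(3)$. Suppose $y\in|\mP(x)|$, i.e. $y$ lies in some $P\in\mP(x)$; this $P$ contains $x$, but I must upgrade this to a $P$ that contains $y$ as well — a priori $x$ need not lie in the particular cell of $\mP$ that contains $y$. The idea is to use the local structure of the subdivision near $x$. Recall from the excerpt that $x$ lies in the interior of $|\mP(x)|$, and that $\mP'(x)=\{\cone(P-x)\mid P\in\mP(x)\}$ is a conical subdivision of $\Rset^q$. Choose a point $y'$ on the open segment $(x,y)$ sufficiently close to $x$ that $y'\in\Int|\mP(x)|$ — this is possible precisely because $x$ is interior to $|\mP(x)|$. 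Then every cell of $\mP$ containing $y'$ lies in $\mP(x)$: indeed if $Q\in\mP$ and $y'\in Q$, then since $y'\in\Int|\mP(x)|$ and $|\mP(x)|$ is a union of finitely many cells, $Q$ must be one of the cells comprising $|\mP(x)|$, i.e. $Q\in\mP(x)$, so $x\in Q$ and $y'\in Q$. Now pick any $Q\in\mP(y')$; we have $x,y'\in Q$. Since $Q$ is convex and $y'$ lies strictly between $x$ and $y$ on the segment, and $y\in P$ for some $P$ with $x\in P$ — here I would use the convexity of $Q$ together with the face-intersection structure of the subdivision to push $y$ into a common cell. More carefully: consider the segment $[x,y]$. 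It is covered by the finitely many cells of $\mP$ it meets; the cell containing the initial subsegment is some $Q$ with $x\in Q$. If $y\in Q$ we are done. Otherwise the segment exits $Q$ through a proper face, crossing into an adjacent cell, and I claim by a connectedness/maximality argument along the segment that in fact the whole segment stays within $|\mP(x)|$ and, using that $y'$ near $x$ already forces the relevant cells into $\mP(x)$, that $y$ itself lies in a cell of $\mP(x)$, hence in a cell containing $x$.

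The main obstacle is exactly this last step: turning "$y$ lies in some cell that, together with $x$, lies in the underlying set $|\mP(x)|$" into "$y$ lies in some single cell that contains $x$." The cleanest route is probably a star-shapedness observation: I would argue that $|\mP(x)|$ is star-shaped with respect to $x$ — equivalently, for any cell $P\in\mP(x)$ and any $z\in P$, the whole segment $[x,z]$ lies in $P$ (true because $P$ is convex and $x,z\in P$). Hence if $y\in P\in\mP(x)$, then $[x,y]\subset P$, so in particular $x$ and $y$ both lie in $P$ — which is precisely statement $(3)$. This actually makes $(1)\Rightarrow(3)$ a one-liner once one observes that "$y\in|\mP(x)|$" unpacks to "$y\in P$ for some $P\in\mP(x)$," and every such $P$ by definition contains $x$. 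So the real subtlety is only in reading the definitions correctly; the lemma is essentially a bookkeeping statement, and I would present it as such, with the segment/star-shaped picture included mainly for intuition. I would double-check there is no hidden issue about whether $|\mP(x)|$'s cells are exactly the $P\in\mP(x)$ versus some larger collection — but the excerpt defines $|\mP(x)|$ as the union of $P\in\mP(x)$, so there is none.
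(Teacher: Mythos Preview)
Your proposal is correct and, once you arrive at the final paragraph, is exactly the paper's argument: $y\in|\mP(x)|$ means $y\in P$ for some $P\in\mP(x)$, and $P\in\mP(x)$ by definition means $x\in P$, giving (3); the rest follows by symmetry. The long detour through star-shapedness, segments, and conical subdivisions is unnecessary and should be cut---as you yourself note at the end, the lemma is pure bookkeeping, and the paper presents it that way in three lines.
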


\begin{proof}
Suppose $y\in |\mP(x)|$. Then there exists $P\in \mP(x)$ such that $y\in P$. The fact that $P\in \mP(x)$ implies $x\in P$. This proves the direction (1)$\Rightarrow$(3).

Now suppose (3) holds. Then $P\in \mP(x)$. Since $y\in P$, we have $y\in |\mP(x)|$. This proves (3)$\Rightarrow$(1). It follows that (3) and (1) are equivalent. Similarly we can prove (3) and (2) are equivalent. The equivalence between (1) and (2) follows.
\end{proof}

\begin{theorem}\label{t:dfx}
Let $x\in \Rset^q$, and let $y\in |\mP(x)|$. Then
\[
d f (x) (y-x) = - df(y)(x-y).
\]
\end{theorem}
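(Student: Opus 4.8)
The plan is to collapse the statement onto a single cell of the subdivision on which $f$ is affine, after which the identity reduces to the linearity of the common affine piece. First I would apply Lemma \ref{l:mPx_mPy}: the hypothesis $y\in|\mP(x)|$ is equivalent to the existence of a set $P\in\mP$ with $x\in P$ and $y\in P$. Since every element of $\mP$ has dimension $q$, the restriction $f|_P$ determines a unique affine function; say $f$ coincides on $P$ with the map $h\mapsto Ah+b$ for some linear $A$ and vector $b$. The point worth being careful about — and essentially the only one — is precisely this uniqueness, so that the \emph{same} $A$ is used in both of the evaluations below; full-dimensionality of $P$ secures it.

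Next I would identify the two relevant difference vectors with points of the appropriate tangent cones and invoke \eqref{eqn:dfx}. Because $y\in P$, we have $y-x\in P-x\subset\cone(P-x)$, and since $P\in\mP(x)$, formula \eqref{eqn:dfx} yields $df(x)(y-x)=A(y-x)$. By the symmetric argument, $x\in P$ gives $x-y\in P-y\subset\cone(P-y)$ with $P\in\mP(y)$, so \eqref{eqn:dfx} yields $df(y)(x-y)=A(x-y)$.

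Finally I would combine the two: since $A$ is linear, $A(x-y)=-A(y-x)$, and therefore
\[
df(x)(y-x)=A(y-x)=-A(x-y)=-df(y)(x-y),
\]
which is the asserted equality. I expect no genuine obstacle: all the substance is carried by Lemma \ref{l:mPx_mPy} (to produce the common cell $P$) together with the local affine description of the B-derivative in \eqref{eqn:dfx}; the rest is a one-line linearity computation.
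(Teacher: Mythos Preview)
Your proposal is correct and follows essentially the same approach as the paper: invoke Lemma~\ref{l:mPx_mPy} to obtain a common cell $P\in\mP$ containing both $x$ and $y$, then apply \eqref{eqn:dfx} on each side using the single linear part $A$ of $f$ on $P$. Your remark on full-dimensionality securing uniqueness of $A$ is a small elaboration, but the argument is otherwise identical.
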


\begin{proof}
By Lemma \ref{l:mPx_mPy}, there exists $P\in \mP$ that contains both $x$ and $y$. Let $A$ be the matrix for the linear part of the selection function of $f$ on $P$. The fact that $y\in P$ implies $y-x\in \cone (P-x)$, so by \eqref{eqn:dfx} we have $df(x)(y-x)=A(y-x)$. The fact $x\in P$ implies $x-y\in \cone(P-y)$. Again by \eqref{eqn:dfx} we have $df(y)(x-y)=A(x-y)$.
\end{proof}
%

Proposition \ref{p:mMy_subset_mMx} below shows that $\mP(x)$ is a superset of $\mP(y)$ for all $y$ sufficiently close to $x$.
Recall from comments below \eqref{eqn:dfx} that the selection functions of $df(x)$ are exactly the linear parts of selection functions of $f$ on elements of $\mP(x)$. Therefore, a consequence of Proposition \ref{p:mMy_subset_mMx} is that
the family of selection functions of $df(x)$ contains the family of selection functions of $df(y)$,
for all $y$ sufficiently close to $x$.
\begin{proposition}\label{p:mMy_subset_mMx}
Let $x\in \Rset^q$. There exists a neighborhood $X$ of $x$, such that each $y\in X$ satisfies $\mP(y) \subset \mP(x)$.
\end{proposition}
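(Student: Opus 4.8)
The plan is to exploit that $\mP$ is a \emph{finite} collection of sets, each of which is a polyhedral convex set and hence closed. First I would reformulate the desired inclusion: for a given point $y$, the relation $\mP(y)\subset\mP(x)$ says precisely that every $P\in\mP$ containing $y$ also contains $x$, i.e. that $y$ avoids every cell in the (finite) ``bad'' family $\mP\setminus\mP(x)$ of cells not containing $x$. So it suffices to produce a neighborhood of $x$ disjoint from each such bad cell.

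For each $P\in\mP\setminus\mP(x)$, the set $P$ is closed and $x\notin P$, so $\Rset^q\setminus P$ is an open set containing $x$. Taking
\[
X=\bigcap_{P\in\mP\setminus\mP(x)}\bigl(\Rset^q\setminus P\bigr),
\]
a finite intersection of open neighborhoods of $x$, yields an open neighborhood $X$ of $x$ with $X\cap P=\emptyset$ for every $P\in\mP\setminus\mP(x)$. (If $\mP(x)=\mP$, simply take $X=\Rset^q$.) Then for any $y\in X$: if $P\in\mP(y)$, i.e. $y\in P$, then $P$ cannot lie in the bad family, since each bad cell is disjoint from $X\ni y$; hence $x\in P$, that is, $P\in\mP(x)$. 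This gives $\mP(y)\subset\mP(x)$ for all $y\in X$, as required. The stated consequence for selection functions of $df(\cdot)$ then follows from the characterization of $df(x)$ recalled after \eqref{eqn:dfx}.

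There is no real obstacle here; the only point to be careful about is that the argument genuinely uses the \emph{finiteness} of $\mP$ together with the closedness of polyhedral convex sets — without finiteness the intersection of the complementary neighborhoods need not be a neighborhood. Nothing about the affine structure of $f$ on the cells, or about dimensions of the cells, is needed.
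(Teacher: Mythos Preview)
Your argument is correct and is in fact considerably more direct than the paper's own proof. You use only two ingredients: that each $P\in\mP$ is closed (being polyhedral) and that $\mP$ is finite, so the intersection $X=\bigcap_{P\in\mP\setminus\mP(x)}(\Rset^q\setminus P)$ is an open neighborhood of $x$ disjoint from every cell not containing $x$. This immediately gives $\mP(y)\subset\mP(x)$ for $y\in X$.

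The paper takes a different route. It starts from the fact that $x\in\Int|\mP(x)|$, chooses $X$ inside this interior, and then for $y\in X$ argues by contradiction: if some $P\in\mP(y)$ were not in $\mP(x)$, then by the face-intersection axiom of a polyhedral subdivision each $P\cap P_i$ (with $P_i\in\mP(x)$ containing $y$) would be a proper face of $P$, hence of dimension less than $q$; but since $P$ is full-dimensional and $y$ lies in the interior of $\cup_i P_i$, the intersection $P\cap(\cup_i P_i)$ must contain a full-dimensional set, a contradiction. Thus the paper invokes both the face structure (property~3 of a polyhedral subdivision) and a dimension argument, neither of which your proof needs. Your approach is strictly more elementary and would apply verbatim to any finite collection of closed sets, whereas the paper's argument is tailored to polyhedral subdivisions.
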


\begin{proof}
Recall that $x$ belongs to the interior of $|\mP(x)|$. Let $X$ be a neighborhood of $x$ in the interior of $|\mP(x)|$, and let $y\in X$. Since $y$ belongs to the interior of $|\mP(x)|$, there exist finitely many polyhedrons $P_1,\cdots, P_k$ in $\mP(x)$, such that $y\in P_i$ for each $i=1,\cdots,k$ and $y$ belongs to the interior of $\cup_{i=1}^k P_i$. Now, let $P\in\mP(y)$; we shall prove that $P=P_i$ for some $i=1,\cdots,k$, by showing that it is impossible for a set $P\in \mP$ that is different from any of those $P_i$'s to meet $\cup_{i=1}^k P_i$. Suppose for the purpose of contradiction that $P$ is different from any $P_i$, $i=1,\cdots,k$. Then, for each $i$ the intersection $P\cap P_i$ is a proper face of $P$. Consequently, the intersection between $P$ and $\cup_{i=1}^k P_i$ is the union of finitely many polyhedrons of dimensions less than $q$. On the other hand, since $P$ is of dimension $q$ and $\cup_{i=1}^k P_i$ contains $y$ in its interior, the intersection between $P$ and $\cup_{i=1}^k P_i$ contains a full-dimensional convex set. This leads to a contradiction. We have thereby proved that $P=P_i$ for some $i=1,\cdots,k$. It follows that $\mP(y) \subset \mP(x)$.
\end{proof}

\section{Limiting properties}\label{s:asy}

This section provides some limiting properties of solutions to \eqref{eqn:saa_z}. Those properties will be used to develop methods on confidence region and simultaneous confidence intervals.

We make two sets of assumptions. Assumption \ref{assu1} below is to obtain nice properties of $f_0$ and $f_n$ about integrability and convergence. Following that, Assumption \ref{assu2} is to guarantee the existence, local uniqueness and stability of the true solution. The notation $T_S(x)$ that appears in Assumption \ref{assu2} denotes the tangent cone to $S$ at a point $x\in S$. Since $S$ is a polyhedral convex set, the following definition applies:
\begin{equation}\label{eqn:def_TSx}
T_S(x)= \{v\in \Rset^q \mid \text{ there exists } t>0  \text{ such that } x + tv \in S\}.
\end{equation}

\begin{assumption}\label{assu1}
(a)   $E\|F( x,\xi)\|^2 < \infty$ for all $x \in O$.\\
\noindent (b) The map $x \mapsto F(x, \xi(\omega))$ is continuously differentiable on $O$ for a.e. $\omega \in \Omega$, and
$E\|d_xF( x,\xi)\|^2 < \infty$ for all $x \in O$.\\
\noindent (c) There exists a square integrable random variable $C$ such that
$$\|F(x, \xi(\omega)) - F(x', \xi(\omega))\| + \|d_xF(x, \xi(\omega)) - d_xF(x', \xi(\omega))\| \le C(\omega) \|x-x'\|,$$
for all $x,x'\in O$ and a.e. $\omega \in \Omega$.
\end{assumption}

The notation $d_xF(x, \xi(\omega))$ here stands for the partial derivative of $F$ w.r.t. $x$, a $q\times q$ matrix.
The norms used in Assumption \ref{assu1} can be any norms in the $\Rset^q$ or $\Rset^{q\times q}$ space, since all norms in a finite-dimensional space are equivalent to each other.
A consequence of Assumption \ref{assu1} is the continuous differentiability of $f_0$ on $O$. Moreover, for any nonempty compact subset $X$ of $O$, let $C^{1} (X, \Rset^q)$ be the Banach space of continuously differentiable mappings $f:X \to \Rset^q$, equipped with the norm
\begin{equation}\label{eqn:norm_C1}
\|f\|_{1,X} = \sup_{x\in X} \|f(x)\| + \sup_{x\in X} \|d f(x)\|.
\end{equation}
Under Assumption \ref{assu1}, the sample average function $f_n$ converges to $f_0$ almost surely as an element of $C^1(X,\Rset^q)$, see, e.g., \cite[Theorems 7.44, 7.48 and 7.52]{sha.den.rus:sp} and \cite[Theorem 3]{lu.bud:crsvi}.

\begin{assumption}
	\label{assu2}
	Suppose that $x_0$ solves the variational inequality \eqref{eqn:svi}. Let $z_0=x_0-f_0(x_0)$, $L=d f_0(x_0)$, $K=T_S(x_0) \cap \{z_0-x_0\}^\perp$, and assume that the normal map  $L_K$ induced by $L$ and $K$, defined as $L_K(h)=L(\Pi_K(h))+h-\Pi_K(h)$ for each $h\in \Rset^q$, is a homeomorphism from $\Rset^q$ to $\Rset^q$.
	\end{assumption}

We assume in the above assumption that \eqref{eqn:svi} has a solution $x_0$. To guarantee the existence of such a solution, one would need to put additional conditions on $f_0$ and $S$. For example, if $f_0$ is strongly monotone on $S$ then \eqref{eqn:svi} must have a (globally unique) solution. Detailed discussions and more general solution existence conditions for variational inequalities can be found in \cite[Chapter 2]{fac.pan:fdv}, \cite[Chapter 12]{rtr.wet:va} and references therein.
The set $K$ defined in Assumption \ref{assu2} is called the critical cone to $S$ associated with $z_0$. The homeomorphism condition on $L_K$ guarantees that $x_0$ is a locally unique solution of \eqref{eqn:svi}, and that \eqref{eqn:svi} continues to have a locally unique solution around $x_0$ under small perturbation of $f_0$, see \cite[Lemma 1]{lu.bud:crsvi} and the original result in \cite{smr:sav}. Being the normal map induced by a linear map and a polyhedral convex cone, $L_K$ is a piecewise linear function. It was shown in \cite{smr:nmi} that $L_K$ is a homeomorphism if and only if it is coherently oriented (a piecewise linear function is coherently oriented if the determinants of matrices representing its selection functions all have the same nonzero sign), see also \cite{ral:bnn,sch:pbn} for shortened proofs.
A special case in which  the coherent orientation condition holds is when the restriction of $L$ on the linear span of $K$
is positive definite. In particular, if $f_0$ is strongly monotone on $O$, then the entire matrix $L$ is positive definite and $L_K$ is a global homeomorphism.

  The normal map $L_K$ is related to the normal map $(f_0)_S$ in \eqref{eqn:def_nm} in the following way.  Because $f_0$ is differentiable at $x_0$ and $\Pi_S$ is B-differentiable, the normal map $(f_0)_S$ is B-differentiable at $z_0$ by the chain rule of B-differentiability, with
\begin{equation}\label{eqn:d_f0_S_h}
d (f_0)_S (z_0) (h) = d f_0 (x_0) (d \Pi_S(z_0)(h)) + h - d \Pi_S(z_0)(h).
\end{equation}
 It was shown in \cite{smr:sav} that $L_K$ is exactly $d (f_0)_S (z_0)$, the B-derivative of $(f_0)_S$ at $z_0$.

The following theorem is adapted from \cite[Theorem 7]{lu.bud:crsvi}.
Here and hereafter, we use $\Sigma_0$ to denote the covariance matrix of $F(x_0,\xi)$.
 Equation \eqref{eqn:zN_dist_cov} in the theorem looks similar to the equation in \cite[Theorem 2.7]{kin.rtr:ats} and  \cite[Equation (5.74)]{sha.den.rus:sp}. Those two references are about the asymptotic distribution of $x_n$, a solution to \eqref{eqn:saa}, while \eqref{eqn:zN_dist_cov} here describes the asymptotic distribution of $z_n$, a solution to \eqref{eqn:saa_z}.

\begin{theorem}\label{t:asy_dis}
Suppose that Assumptions \ref{assu1} and \ref{assu2} hold. Let $Y_0$ be a normal random vector in $\Rset^q$ with zero mean and covariance matrix $\Sigma_0$. Then there exist neighborhoods $X_0$ of $x_0$ in $O$ and $Z$ of $z_0$ in $\Rset^q$ such that the following hold. For almost every $\w\in \Omega$, there exists an integer $N_{\w}$, such that for each $n \ge N_{\w}$, the equation
\eqref{eqn:saa_z}
 has a unique solution $z_n$ in $Z$, and the variational inequality \eqref{eqn:saa} has a unique solution in $X_0$ given by $x_n=\Pi_S(z_n)$. Moreover, $\lim_{n\to\infty}z_n = z_0$ and $\lim_{n\to\infty}x_n = x_0$ almost surely,
\begin{equation}\label{eqn:zN_dist_cov}
\sqrt{n}(z_n - z_0) \Rightarrow (L_K)^{-1}(Y_0),
\end{equation}
and
 \begin{equation}\label{eqn:zN_dist_cov2}
 \sqrt{n} L_K (z_n- z_0) \Rightarrow Y_0.
 \end{equation}
\end{theorem}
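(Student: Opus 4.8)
The plan is to deduce Theorem~\ref{t:asy_dis} from the already-known asymptotic behavior of the SAA solution $x_n$ to \eqref{eqn:saa} together with the structural facts about normal maps recalled above; the starting point is \cite[Theorem 7]{lu.bud:crsvi}, of which this statement is essentially a repackaging, but let me lay out the steps one would carry out from scratch. First I would establish the existence and local uniqueness part. Under Assumption~\ref{assu2}, $L_K = d(f_0)_S(z_0)$ is a homeomorphism, hence (by the coherent-orientation characterization of \cite{smr:nmi}) $(f_0)_S$ is a locally Lipschitz homeomorphism near $z_0$ with $(f_0)_S(z_0)=0$. By Robinson's implicit-function theorem for normal maps \cite{smr:sav} (see \cite[Lemma~1]{lu.bud:crsvi}), for any $C^1$-perturbation $g$ of $f_0$ sufficiently close to $f_0$ in $\|\cdot\|_{1,X}$ on a compact neighborhood $X$ of $x_0$, the perturbed normal map $g_S$ has a unique zero near $z_0$, depending Lipschitz-continuously on $g$. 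Since Assumption~\ref{assu1} gives $f_n \to f_0$ almost surely in $C^1(X,\Rset^q)$, for a.e.\ $\w$ there is $N_\w$ so that for $n\ge N_\w$ the equation \eqref{eqn:saa_z} has a unique solution $z_n$ in a fixed neighborhood $Z$ of $z_0$, and $z_n\to z_0$; setting $x_n=\Pi_S(z_n)$ and using the equivalence between \eqref{eqn:saa} and \eqref{eqn:saa_z} gives the corresponding statement for $x_n$, with $x_n\to x_0$ by continuity of $\Pi_S$.

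Next I would derive the weak-convergence statement \eqref{eqn:zN_dist_cov}. The key analytic input is a first-order expansion of the solution map. Writing $z_n$ as the zero of $(f_n)_S$ and $z_0$ as the zero of $(f_0)_S$, one has $(f_0)_S(z_n) = (f_0)_S(z_n) - (f_n)_S(z_n)$. On the right, $(f_0)_S(z_n)-(f_n)_S(z_n) = f_0(\Pi_S(z_n)) - f_n(\Pi_S(z_n))$ because the $z-\Pi_S(z)$ terms cancel; by the $C^1$ convergence and a mean-value argument this equals $-(f_n-f_0)(x_0) + o_p(n^{-1/2})$, and by the central limit theorem $\sqrt n\,(f_n-f_0)(x_0) \Rightarrow Y_0$ with $Y_0\sim\mN(0,\Sigma_0)$. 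On the left, since $(f_0)_S$ is B-differentiable at $z_0$ with B-derivative $L_K$, which is a Lipschitz homeomorphism, the inverse $((f_0)_S)^{-1}$ is B-differentiable at $0$ with B-derivative $(L_K)^{-1}$; combined with $z_n\to z_0$ this yields $z_n - z_0 = (L_K)^{-1}\bigl((f_0)_S(z_n)\bigr) + o(\|z_n-z_0\|)$. Substituting, multiplying by $\sqrt n$, and absorbing the $o$-terms (using that $\sqrt n(z_n-z_0)$ is tight, which follows because $(L_K)^{-1}$ is globally Lipschitz) gives $\sqrt n(z_n-z_0) = (L_K)^{-1}\bigl(\sqrt n\,(f_0)_S(z_n)\bigr) + o_p(1)$, and then the continuous mapping theorem applied to the continuous function $(L_K)^{-1}$ delivers \eqref{eqn:zN_dist_cov}. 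Finally, \eqref{eqn:zN_dist_cov2} follows by applying the continuous function $L_K$ to \eqref{eqn:zN_dist_cov}: since $L_K$ is a homeomorphism, $L_K\circ(L_K)^{-1}=\id$, so $\sqrt n\,L_K(z_n-z_0) = L_K\bigl(\sqrt n(z_n-z_0)\bigr) \Rightarrow L_K\bigl((L_K)^{-1}(Y_0)\bigr) = Y_0$, again by the continuous mapping theorem, once one checks that $\sqrt n\,L_K(z_n-z_0)$ and $L_K(\sqrt n(z_n-z_0))$ agree up to $o_p(1)$ via the positive homogeneity and Lipschitz property of $L_K$ together with the expansion above.

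The main obstacle is the nonsmoothness: $(f_0)_S$ is only B-differentiable, not differentiable, at $z_0$, so the classical delta method does not apply directly and one must work with the B-derivative and its inverse. The crucial facts that make this go through are that $L_K$, being a coherently oriented piecewise linear homeomorphism, is \emph{globally} bi-Lipschitz, so both $L_K$ and $(L_K)^{-1}$ are globally Lipschitz and positively homogeneous; this is what lets the $o(\|z_n-z_0\|)$ error in the B-differentiable expansion be controlled after multiplication by $\sqrt n$, and what makes the continuous mapping theorem applicable. A secondary technical point is justifying that $((f_0)_S)^{-1}$ inherits B-differentiability at $0$ with B-derivative $(L_K)^{-1}$; this is a known inverse-function property for B-differentiable Lipschitz homeomorphisms, but it is the step where care is needed. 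Since all of this is packaged in \cite[Theorem~7]{lu.bud:crsvi}, the proof here can simply cite that result and note the adaptation from the $x_n$-formulation to the $z_n$-formulation.
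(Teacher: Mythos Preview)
The paper does not prove Theorem~\ref{t:asy_dis}; it states the result and attributes it to \cite[Theorem~7]{lu.bud:crsvi}. Your proposal correctly identifies this and, as its last sentence indicates, the appropriate ``proof'' here is simply that citation. So in that sense your proposal matches the paper.

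Your from-scratch sketch is also broadly sound and follows the standard line of argument (Robinson-type stability for existence and Lipschitz dependence, then a B-differentiable delta method for the asymptotic distribution). One small correction: in your last step you write that $\sqrt n\,L_K(z_n-z_0)$ and $L_K(\sqrt n(z_n-z_0))$ ``agree up to $o_p(1)$''. In fact they agree exactly, because $L_K$ is piecewise linear and hence positively homogeneous; no approximation is needed there. A more substantive point to be careful with is the tightness argument: you invoke tightness of $\sqrt n(z_n-z_0)$ to absorb the $o(\|z_n-z_0\|)$ term, but as written this is circular since tightness is what you are trying to establish. The clean way to break the circularity is to use the Lipschitz dependence of the solution map (from the implicit-function step) to get $\|z_n-z_0\|\le C\|f_n(x_0)-f_0(x_0)\| + o(\|z_n-z_0\|)$ directly, which yields $\sqrt n\|z_n-z_0\| = O_p(1)$ before you pass to the limit.
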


From \eqref{eqn:zN_dist_cov2}, the random variable $n [L_K(z_n-z_0)]^T \Sigma_0^{-1} [L_K(z_n-z_0)]$ weakly converges to a $\chi^2$ random variable with $q$ degrees of freedom (assuming $\Sigma_0$ is nonsingular). This leads to an expression for confidence regions of $z_0$. However, that expression includes the normal map $L_K$ in it, which is unknown unless $z_0$ is known because both $L$ and $K$ depend on $z_0$ or $x_0$. Therefore, to obtain a computable confidence region, one needs to substitute $L_K$ by a function that depends only on $z_n$ and $x_n$, without losing the limiting property.

Recall that $L_K$ is exactly $d (f_0)_S (z_0)$, the B-derivative of the normal map $(f_0)_S$ at $z_0$. A natural estimator of $L_K$ is therefore $d (f_n)_S(z_n)$. However, from the discontinuity of B-derivatives of piecewise affine functions discussed above Lemma \ref{l:mPx_mPy} and given that $\Pi_S$ is a piecewise affine function, the B-derivative $d \Pi_S(z_n)$ is not guaranteed to converge to $d \Pi_S(z_0)$ even though $z_n$ converges to $z_0$ almost surely. From \eqref{eqn:d_f0_S_h} it is clear that $d(f_n)_S(z_n)$ is not guaranteed to converge to $L_K$. To provide an alternative estimator of $L_K$, the papers \cite{lu:nmb} and \cite{lu.bud:crsvi} designed some functions to converge to $L_K$, by utilizing the exponential convergence rate of $z_n$ to $z_0$ in probability obtained under additional assumptions.

The approach taken here is different from methods in \cite{lu:nmb,lu.bud:crsvi}. Instead of designing functions that converge to $L_K$, we directly use $d(f_n)_S(z_n)$ to replace $L_K$ in \eqref{eqn:zN_dist_cov2}, knowing that $d(f_n)_S(z_n)$ is not guaranteed to converge to $L_K$ and may be very different with different $z_n$. The method is based on Theorem \ref{t:d_f0_S_d_fN_S}, which says that for large $n$ the vector $d(f_n)_S(z_n)(z_0-z_n)$ is close to $-L_K(z_n-z_0)$ with high probability, and that using $d(f_n)_S(z_n)$ to replace $L_K$ in \eqref{eqn:zN_dist_cov2} along with some sign changes keeps the weak convergence result to hold. As mentioned earlier, the proof of Theorem \ref{t:d_f0_S_d_fN_S} relies on a  ``symmetry'' property between the B-derivatives at two different points given in Theorem \ref{t:dfx}. From Theorems \ref{t:d_f0_S_d_fN_S} we derive Theorems \ref{t:zN_weakconv_nonsinuglar} and \ref{t:zN_weakconv_sinuglar}, to show that the set \eqref{eqn:conf_reg_nonsingular}, or \eqref{eqn:R_n_epsilon} for the singular case, is an asymptotically exact confidence region for $z_0$: that is, the set contains $z_0$ with probability converging to the prescribed level of confidence.

Comparing to methods in \cite{lu:nmb,lu.bud:crsvi}, the main advantage with the new method is given by Proposition \ref{p:invertible}, which states that $d(f_n)_S(z_n)$ is with high probability an invertible linear function. This implies that the set \eqref{eqn:conf_reg_nonsingular} is with high probability a single ellipsoid, when $\Sigma_0$ is nonsingular. When $\Sigma_0$ is singular one could use  $R_{n,0}$ in \eqref{eqn:R_n_0} to approximate $R_{n,\epsilon}$ in \eqref{eqn:R_n_epsilon}, and $R_{n,0}$ is with high probability a degenerate ellipsoid. In contrast, when $L_K$ is piecewise linear with multiple pieces, the estimators designed in \cite{lu:nmb,lu.bud:crsvi} have multiple pieces with high probability, providing confidence regions that are factions of ellipsoids pieced together. Clearly, it is much easier to describe a single ellipsoid, or to find the minimal enclosing box of a single ellipsoid to obtain simultaneous confidence intervals. Additionally, the new method does not rely on the assumptions for the exponential convergence rate of $z_n$, another advantage comparing to \cite{lu:nmb,lu.bud:crsvi}.



Before proceeding to the proofs, recall that the Euclidean projector $\Pi_S$ coincides with an affine function on each $q$-cell in the normal manifold of $S$. We use $\mP$ to denote the normal manifold of $S$, which is the polyhedral subdivision of $\Rset^q$ corresponding to $\Pi_S$. As in Section \ref{s:pa} we use $\mP(z_0)$ to denote the family of $q$-cells containing $z_0$. Lemma \ref{l:d_Pi_S} below will be used in the proof of Theorem \ref{t:d_f0_S_d_fN_S}.

\begin{lemma}\label{l:d_Pi_S}
Suppose that Assumptions \ref{assu1} and \ref{assu2} hold. Then for almost every $\w\in \Omega$ there exists an integer $N_\w$ such that the following equality holds for each $n\ge N_\w$:
 \begin{equation}\label{eqn:d_Pi_S}
d\Pi_S(z_0)(z_n-z_0)+ d\Pi_S(z_n)(z_0-z_n) = 0.
\end{equation}
\end{lemma}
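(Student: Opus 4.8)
The plan is to apply the ``symmetry'' identity of Theorem \ref{t:dfx} to the piecewise affine function $\Pi_S$, with the subdivision $\mP$ being the normal manifold of $S$, at the points $x=z_0$ and $y=z_n$. To do so I need to know that $z_n\in|\mP(z_0)|$ for all large $n$, for almost every $\w$.

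First I would recall that $z_0$ lies in the interior of $|\mP(z_0)|$ (this is the observation made in Section \ref{s:pa}, just below the definition of $|\mP(x)|$). Since $|\mP(z_0)|$ is a finite union of $q$-cells, its interior is an open set containing $z_0$; call it $X$. By Theorem \ref{t:asy_dis}, $z_n\to z_0$ almost surely, so for almost every $\w$ there is an integer $N_\w$ (which I can take to be at least as large as the one furnished by Theorem \ref{t:asy_dis}, so that $z_n$ is the unique solution of \eqref{eqn:saa_z} in $Z$) such that $z_n\in X\subset|\mP(z_0)|$ for all $n\ge N_\w$.

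Next, fix such an $\w$ and $n\ge N_\w$. Since $z_n\in|\mP(z_0)|$, Lemma \ref{l:mPx_mPy} (with the roles $x=z_0$, $y=z_n$) gives a $q$-cell $P\in\mP$ containing both $z_0$ and $z_n$. Now apply Theorem \ref{t:dfx} to $f=\Pi_S$ at the point $x=z_0$ with $y=z_n\in|\mP(z_0)|$:
\[
d\Pi_S(z_0)(z_n-z_0) = -\,d\Pi_S(z_n)(z_0-z_n).
\]
Rearranging yields exactly \eqref{eqn:d_Pi_S}, which completes the proof.

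I do not expect a genuine obstacle here: the argument is a direct combination of the almost sure convergence $z_n\to z_0$, the fact that $z_0$ is interior to $|\mP(z_0)|$, and Theorem \ref{t:dfx}. The only point requiring a little care is bookkeeping of the exceptional null set and the integer $N_\w$ — one should intersect the null set on which $z_n\not\to z_0$ (or on which the conclusions of Theorem \ref{t:asy_dis} fail) with nothing else, and enlarge $N_\w$ as needed so that both $z_n\in X$ and the uniqueness/well-definedness of $z_n$ hold simultaneously.
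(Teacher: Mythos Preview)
Your proposal is correct and follows essentially the same approach as the paper: use the fact that $z_0\in\Int|\mP(z_0)|$ together with the almost sure convergence $z_n\to z_0$ to place $z_n\in|\mP(z_0)|$ for all large $n$, and then invoke Theorem~\ref{t:dfx} with $f=\Pi_S$. The paper's proof is slightly terser (it does not separately cite Lemma~\ref{l:mPx_mPy} or discuss the bookkeeping of $N_\w$), but the argument is identical.
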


\begin{proof}
Recall that $z_0$ belongs to the interior of $|\mP(z_0)|$. Since $z_n$ converges to $z_0$ w.p. 1, for almost every $\w\in \Omega$ there exists an integer $N_\w$ such that $z_n$ belongs to $|\mP(z_0)|$ for each $n\ge N_\w$. It follows from Theorem \ref{t:dfx} that each such $z_n$ satisfies
\eqref{eqn:d_Pi_S}.
\end{proof}

Theorem \ref{t:d_f0_S_d_fN_S} below is the main result of this paper.

\begin{theorem}\label{t:d_f0_S_d_fN_S}
Suppose that Assumptions \ref{assu1} and \ref{assu2} hold. Then for each $\epsilon>0$ we have
\begin{equation}\label{eqn:d_f0_d_fN}
\lim_{n\to\infty} \Prob\{\sqrt{n}\|d(f_0)_S(z_0)(z_n-z_0)+ d(f_n)_S(z_n)(z_0-z_n)\|> \epsilon\} = 0.
\end{equation}
Consequently,
\begin{equation}\label{eqn:zN_dist_cov_dfN}
 -\sqrt{n} d(f_n)_S(z_n) (z_0- z_n) \Rightarrow Y_0.
 \end{equation}
\end{theorem}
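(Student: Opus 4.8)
The plan is to expand the two B-derivatives by the chain rule, use Lemma~\ref{l:d_Pi_S} to cancel the bulk of the resulting terms, and then bound the small remainder as a product of a factor that tends to zero almost surely and a factor that is bounded in probability; the weak-convergence consequence then follows by Slutsky's theorem. First I would write, using \eqref{eqn:d_f0_S_h} and its analogue for $(f_n)_S$ at $z_n$ (the chain rule applies because $f_n$ is $C^1$ by Assumption~\ref{assu1}(b), $\Pi_S$ is B-differentiable, and $x_n=\Pi_S(z_n)$),
\begin{align*}
d(f_0)_S(z_0)(z_n-z_0) &= df_0(x_0)\big(d\Pi_S(z_0)(z_n-z_0)\big) + (z_n-z_0) - d\Pi_S(z_0)(z_n-z_0),\\
d(f_n)_S(z_n)(z_0-z_n) &= df_n(x_n)\big(d\Pi_S(z_n)(z_0-z_n)\big) + (z_0-z_n) - d\Pi_S(z_n)(z_0-z_n).
\end{align*}
Adding these and invoking Lemma~\ref{l:d_Pi_S}, which gives $d\Pi_S(z_0)(z_n-z_0)+d\Pi_S(z_n)(z_0-z_n)=0$ for all $n\ge N_\w$ on a set of full measure, every term cancels except the two $df$-terms; writing $u_n:=d\Pi_S(z_0)(z_n-z_0)$, so that $d\Pi_S(z_n)(z_0-z_n)=-u_n$, and using that $df_n(x_n)$ is linear, this yields, for $n\ge N_\w$,
\[
d(f_0)_S(z_0)(z_n-z_0)+d(f_n)_S(z_n)(z_0-z_n) = \big(df_0(x_0)-df_n(x_n)\big)(u_n).
\]

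Next I would estimate $\sqrt{n}$ times the norm of the right-hand side by $\|df_0(x_0)-df_n(x_n)\|\cdot\sqrt{n}\,\|u_n\|$. Since $\Pi_S$ is nonexpansive in the Euclidean norm, its B-derivative satisfies $\|d\Pi_S(z_0)(h)\|\le\|h\|$ for every $h$ in the Euclidean norm, hence $\|u_n\|\le c\,\|z_n-z_0\|$ in an arbitrary norm for a suitable constant $c$. The factor $\sqrt{n}\,\|z_n-z_0\|$ is bounded in probability by \eqref{eqn:zN_dist_cov}. The factor $\|df_0(x_0)-df_n(x_n)\|$ tends to $0$ almost surely: bound it by $\|df_0(x_0)-df_0(x_n)\|+\|df_0(x_n)-df_n(x_n)\|$, where the first term vanishes because $df_0$ is continuous (a consequence of Assumption~\ref{assu1}) and $x_n\to x_0$ a.s., and the second is at most $\sup_{x\in X}\|df_0(x)-df_n(x)\|\le\|f_n-f_0\|_{1,X}\to 0$ a.s. for a compact neighborhood $X\subset O$ of $x_0$ containing all $x_n$ with $n$ large, by the almost sure $C^1(X,\Rset^q)$-convergence $f_n\to f_0$ recorded under Assumption~\ref{assu1}. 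A sequence that is $O_p(1)$ times a sequence tending to $0$ a.s. tends to $0$ in probability, which is \eqref{eqn:d_f0_d_fN}.

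For the consequence \eqref{eqn:zN_dist_cov_dfN}, recall that $L_K=d(f_0)_S(z_0)$, so \eqref{eqn:zN_dist_cov2} reads $\sqrt{n}\,d(f_0)_S(z_0)(z_n-z_0)\Rightarrow Y_0$. Writing
\[
-\sqrt{n}\, d(f_n)_S(z_n)(z_0-z_n) = \sqrt{n}\, d(f_0)_S(z_0)(z_n-z_0) - \sqrt{n}\big(d(f_0)_S(z_0)(z_n-z_0)+d(f_n)_S(z_n)(z_0-z_n)\big),
\]
the second term on the right tends to $0$ in probability by \eqref{eqn:d_f0_d_fN}, so Slutsky's theorem yields the weak limit $Y_0$.

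As for the main obstacle: the argument contains no hard estimate, so the only point requiring care is the handling of the almost-sure events. The algebraic cancellation above is valid only for $n\ge N_\w$ on the full-measure set where $z_n$ exists, is unique and sits inside the cell structure near $z_0$ (Theorem~\ref{t:asy_dis} and Lemma~\ref{l:d_Pi_S}), and this set must be intersected with the full-measure convergence events for $x_n$ and for $f_n\to f_0$ in $C^1(X,\Rset^q)$ before the factors are combined. All the genuine content is already packed into Lemma~\ref{l:d_Pi_S}, which in turn rests on the symmetry identity of Theorem~\ref{t:dfx}; once that identity is available, the remainder is Slutsky's theorem together with a uniform law of large numbers, and in particular one never needs $d(f_n)_S(z_n)$ to converge to $L_K$, which it does not.
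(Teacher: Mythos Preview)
Your proposal is correct and follows essentially the same approach as the paper: expand both B-derivatives via the chain rule, use Lemma~\ref{l:d_Pi_S} to effect the cancellation, and bound the residual as the product of $\|df_0(x_0)-df_n(x_n)\|\to 0$ a.s.\ with a tight factor coming from \eqref{eqn:zN_dist_cov}, then apply Slutsky for \eqref{eqn:zN_dist_cov_dfN}. Your direct algebraic cancellation to $(df_0(x_0)-df_n(x_n))(u_n)$ is slightly cleaner than the paper's four-term triangle-inequality decomposition, but the content is identical.
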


\begin{proof}
Recall from Assumption \ref{assu2} and Theorem \ref{t:asy_dis} that $x_0=\Pi_S(z_0)$ and $x_n=\Pi_S(z_n)$ are solutions to \eqref{eqn:svi} and \eqref{eqn:saa} respectively. From \eqref{eqn:d_f0_S_h} we have
\begin{equation}\label{eqn:d_f0_S}
d (f_0)_S (z_0) (z_n-z_0) = d f_0 (x_0) (d \Pi_S(z_0)(z_n-z_0)) + z_n-z_0 - d \Pi_S(z_0)(z_n-z_0).
\end{equation}
Similarly,
\begin{equation}\label{eqn:d_fN_S}
d (f_n)_S (z_n) (z_0-z_n)  = d f_n (x_n) (d \Pi_S(z_n)(z_0-z_n)) + z_0-z_n - d \Pi_S(z_n)(z_0-z_n).
\end{equation}
It follows that $\sqrt{n}\|d f_0)_S (z_0) (z_n-z_0) + d (f_n)_S (z_n) (z_0-z_n) \|$ is bounded from above by the sum of the following two terms.
\begin{description}
\item Term (a): $\sqrt{n}\|d f_0 (x_0) (d \Pi_S(z_0)(z_n-z_0)) +d f_n (x_n) (d \Pi_S(z_n)(z_0-z_n))\|$.
\item Term (b): $\sqrt{n}\|d \Pi_S(z_0)(z_n-z_0)+ d \Pi_S(z_n)(z_0-z_n)\|$.
\end{description}
By Lemma \ref{l:d_Pi_S}, term (b) converges to 0 almost surely, so it converges to 0 in probability. It remains to show that Term (a) converges to 0 in probability. Term (a) is bounded from above by the sum of the following two terms.
\begin{description}
\item Term (c): $\sqrt{n}\|d f_0 (x_0) (d \Pi_S(z_0)(z_n-z_0)) -d f_n (x_n) (d \Pi_S(z_0)(z_n-z_0))\|$.
\item Term (d): $\sqrt{n}\|d f_n (x_n) (d \Pi_S(z_0)(z_n-z_0)) +d f_n (x_n) (d \Pi_S(z_n)(z_0-z_n))\|$.
\end{description}
Because $df_n(x_n)(\cdot)$ is a linear map, by Lemma \ref{l:d_Pi_S} term (d) converges to 0 almost surely and therefore converges to 0 in probability. To prove the theorem it suffices to show that term (c) converges to 0 in probability.

Term (c) is bounded from above by the following
\begin{equation}\label{eqn:bdd}
\|d f_0 (x_0)- d f_n (x_n)\| \ \|\sqrt{n}(d \Pi_S(z_0)(z_n-z_0))\|
\end{equation}
where $\|d f_0 (x_0)- d f_n (x_n)\|$ is the norm of the linear operator $d f_0 (x_0)- d f_n (x_n)$, which is bounded from above by
\[
\|d f_0 (x_0)- d f_0 (x_n)\| + \|d f_0 (x_n)- d f_n (x_n)\|.
\]
Since $x_n$ converges to $x_0$ almost surely and $f_0$ is continuously differentiable under Assumption \ref{assu1}, $\|d f_0 (x_0)- d f_0 (x_n)\|$ converges to 0 almost surely. Assumption \ref{assu1} also guarantees $f_n$ to converge to $f_0$ almost surely as an element of $C^1(X,\Rset^q)$ for any compact subset $X$ of $O$. Let $X$ be a compact subset of $O$ that contains $x_0$ in its interior; we have $x_n \in X$ for all sufficiently large $n$. It follows that $\|d f_0 (x_n)- d f_n (x_n)\|$ converges to zero almost surely. This proves that $\|d f_0 (x_0)- d f_n (x_n)\|$ converges to zero almost surely. On the other hand, since $d \Pi_S(z_0)$ is positively homogenous, we can rewrite $\sqrt{n}(d \Pi_S(z_0)(z_n-z_0))$ as $d \Pi_S(z_0)(\sqrt{n}(z_n-z_0))$. By \eqref{eqn:zN_dist_cov}, $\sqrt{n}(z_n-z_0)$ converges in distribution to a random variable, so it is (uniformly) tight; see, e.g., \cite[Theorem 2.4]{vaa:ass}. It follows that $d \Pi_S(z_0)(\sqrt{n}(z_n-z_0))$ is uniformly tight, and that
the quantity \eqref{eqn:bdd} converges to 0 in probability. This proves that term (c) converges to 0 in probability, and thereby proves \eqref{eqn:d_f0_d_fN}.
Equation \eqref{eqn:zN_dist_cov_dfN} is a result of \eqref{eqn:zN_dist_cov2} and \eqref{eqn:d_f0_d_fN}.
\end{proof}

 Proposition \ref{p:invertible} below shows the function $d(f_n)_S(z_n)$ that appears in \eqref{eqn:zN_dist_cov_dfN} is a linear invertible function with high probability. Its proof uses the following lemma.

\begin{lemma}\label{l:local_homeomorphism}
Under Assumptions \ref{assu1} and \ref{assu2}, there exist a neighborhood $Z'$ of $z_0$ in $\Rset^q$ and a neighborhood $\mathbb{L}$ of the matrix $L$ in $\Rset^{q\times q}$, such that for each $z'\in Z'$ and $L'\in \mathbb{L}$ the map $\Upsilon(z',L'): \Rset^q\to \Rset^q$ defined as
\begin{equation}\label{eqn:def_Upsilon}
\Upsilon(z',L')(h)=L' d \Pi_S(z') (h)+ h - d \Pi_S(z')(h) \text{ for each } h\in \Rset^q
\end{equation}
is a global homeomorphism from $\Rset^q$ to $\Rset^q$.
\end{lemma}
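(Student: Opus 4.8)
The plan is to reduce the statement to a known local-stability result for normal maps, specifically the structural stability of homeomorphic normal maps induced by a linear map and a polyhedral cone. Recall from Assumption \ref{assu2} that $L_K$, the normal map induced by $L=df_0(x_0)$ and the critical cone $K$, is a homeomorphism from $\Rset^q$ to $\Rset^q$, and that $L_K$ equals $d(f_0)_S(z_0)$, the B-derivative of $(f_0)_S$ at $z_0$. The key observation is that for $z'$ in the interior of $|\mP(z_0)|$, the map $h\mapsto d\Pi_S(z')(h)$ is a \emph{linear} map (a single selection function of $d\Pi_S(z_0)$, by Proposition \ref{p:mMy_subset_mMx} and the remarks following \eqref{eqn:dfx}), so $\Upsilon(z',L')$ is an honest linear map; moreover each such selection function of $d\Pi_S(z_0)$ is one of the affine pieces of $\Pi_S$ restricted to a $q$-cell through $z_0$. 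The idea is that $\Upsilon(z_0,L)$ restricted to each cone in $\mP'(z_0)$ is exactly a selection function of $L_K=d(f_0)_S(z_0)$, and since $L_K$ is a homeomorphism it is coherently oriented, meaning every such selection matrix has determinant of the same nonzero sign. Invertibility of $\Upsilon(z',L')$ for $(z',L')$ near $(z_0,L)$ should then follow by a continuity/openness argument on determinants.

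Concretely, here is the sequence of steps I would carry out. First, choose a neighborhood $Z'$ of $z_0$ contained in the interior of $|\mP(z_0)|$; by Proposition \ref{p:mMy_subset_mMx} one may shrink $Z'$ so that $\mP(z')\subset\mP(z_0)$ for all $z'\in Z'$, and in fact so that for each $z'\in Z'$ there is a $q$-cell $P\in\mP(z_0)$ with $z'\in\Int P$, whence $d\Pi_S(z')$ is the linear map $A_P$ associated to that cell. Second, enumerate the finitely many $q$-cells $P_1,\dots,P_r$ through $z_0$ and their associated linear parts $A_1,\dots,A_r$ of $\Pi_S$; for each index $i$ form the matrix $M_i(L')=L'A_i+I-A_i$, so that $\Upsilon(z',L')=M_i(L')$ whenever $d\Pi_S(z')=A_i$. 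Third, observe that $M_i(L)$ is precisely the matrix of the selection function of $L_K$ on the cone $\cone(P_i-z_0)\in\mP'(z_0)$ (using \eqref{eqn:dfx} applied to $(f_0)_S$ at $z_0$ together with \eqref{eqn:d_f0_S_h}); since $L_K$ is a homeomorphism it is coherently oriented \cite{smr:nmi,ral:bnn,sch:pbn}, so $\det M_i(L)$ is a fixed nonzero sign independent of $i$. Fourth, since $L'\mapsto\det M_i(L')$ is continuous and nonzero at $L'=L$ for every $i$ in the finite index set, there is a neighborhood $\mathbb{L}$ of $L$ on which $\det M_i(L')\ne 0$ for all $i=1,\dots,r$ simultaneously; hence for all $z'\in Z'$ and $L'\in\mathbb{L}$ the linear map $\Upsilon(z',L')$ is invertible, which for a linear map on $\Rset^q$ is the same as being a global homeomorphism.

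The main obstacle, and the point that deserves care, is verifying the identification in the third step: that the linear pieces $A_i$ of $\Pi_S$ on the $q$-cells through $z_0$ are exactly the pieces that produce the selection functions of $L_K$, with matrices $L A_i + I - A_i$, so that coherent orientation of $L_K$ transfers verbatim. This requires combining the chain-rule formula \eqref{eqn:d_f0_S_h} for $d(f_0)_S(z_0)$ with the piecewise-affine B-derivative formula \eqref{eqn:dfx}: on $h\in\cone(P_i-z_0)$ one has $d\Pi_S(z_0)(h)=A_i h$, and substituting into \eqref{eqn:d_f0_S_h} gives $d(f_0)_S(z_0)(h)=L A_i h + h - A_i h = M_i(L)h$; since $d(f_0)_S(z_0)=L_K$ and $\mP'(z_0)$ is the subdivision of $L_K$, the $M_i(L)$ are exactly its selection matrices, so coherent orientation says they share a common nonzero determinant sign. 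A secondary subtlety is ensuring $\mathbb{L}$ and $Z'$ can be chosen uniformly over the finite family of cells — but this is immediate because the index set $\{1,\dots,r\}$ is finite and a finite intersection of neighborhoods is a neighborhood. Everything else is routine continuity of determinants and the standard fact that an invertible linear self-map of $\Rset^q$ is a homeomorphism.
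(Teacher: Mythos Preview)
Your argument has a genuine gap in the first step. You claim one can shrink $Z'$ so that every $z'\in Z'$ lies in the interior of some $q$-cell $P\in\mP(z_0)$, making $d\Pi_S(z')$ a single linear map $A_P$. This is impossible whenever $z_0$ itself lies on the boundary of two or more $q$-cells (that is, whenever $\mP(z_0)$ has more than one element, which is exactly the interesting case for this paper). In that situation $z_0$, and every nearby point on the common face of those cells, fails to lie in the interior of any $q$-cell; no neighborhood of $z_0$ avoids those lower-dimensional faces. Consequently your reduction of $\Upsilon(z',L')$ to a single invertible matrix $M_i(L')$ does not cover all $z'\in Z'$, and in particular does not cover $z'=z_0$. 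Your final conclusion ``invertible linear map, hence homeomorphism'' therefore does not establish the lemma as stated.

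The paper's proof closes this gap by observing that for \emph{every} $z'$, the B-derivative $d\Pi_S(z')$ is the Euclidean projector onto the critical cone to $S$ at $z'$, so $\Upsilon(z',L')$ is always a normal map induced by $L'$ and a polyhedral cone---possibly piecewise linear with several selection matrices. For normal maps, coherent orientation is equivalent to being a global homeomorphism. Your third and fourth steps already supply the needed ingredient: for $L'\in\mathbb{L}$ all the matrices $M_i(L')=L'A_i+I-A_i$, $i=1,\dots,r$, share a common nonzero determinant sign. Combining this with Proposition~\ref{p:mMy_subset_mMx} (which you correctly invoke) gives that the selection matrices of $\Upsilon(z',L')$ form a subset of $\{M_1(L'),\dots,M_r(L')\}$, hence are coherently oriented, hence $\Upsilon(z',L')$ is a homeomorphism even when it has multiple pieces. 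So your identification of the selection matrices and the determinant-continuity argument are correct and essential; the missing piece is the normal-map interpretation of $\Upsilon(z',L')$ and the coherent-orientation criterion applied to it, rather than the false claim that $\Upsilon(z',L')$ is always linear.
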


\begin{proof}
First, note that for each $z'\in \Rset^q$ the B-derivative $d\Pi_S(z')$ is exactly the Euclidean projector onto the critical cone to $S$ associated with $z'$ \cite{pan:nmb,smr:ift}. Hence, the map $\Upsilon(z',L')$ is exactly the normal map induced by $L'$ and the latter critical cone. Recall that such a normal map is a global homeomorphism if and only if it is coherently oriented (see the discussion below Assumption \ref{assu2}). In the rest of this proof, we find neighborhoods $Z'$ and $\mathbb{L}$ to guarantee $\Upsilon(z',L')$ to be coherently oriented for $z'\in Z'$ and $L'\in \mathbb{L}$.

Since $K$ is the critical cone to $S$ associated with $z_0$, we have $d\Pi_S(z_0)=\Pi_K$ and $\Upsilon(z_0,L)=L_K$. Because $L_K$ is a global homeomorphism by assumption, it is coherently oriented. The fact that $K$ is a polyhedral convex cone implies that $L_K$ is a piecewise linear function. Let $M_1, \cdots, M_k$ be the matrices that represent the selection functions of $L_K$.  Since $L_K$ is coherently oriented, the determinants of $M_1, \cdots, M_k$ have the same nonzero sign (see the definition of the coherent orientation condition below Assumption \ref{assu2}). By choosing $ \mathbb{L}$ to be a sufficiently small neighborhood of $L$, we can guarantee that the determinants of matrices representing selection functions of $\Upsilon(z_0,L')$ to have the same nonzero sign for each $L'\in \mathbb{L}$. This proves that $\Upsilon(z_0,L')$ is coherently oriented for each $L'\in \mathbb{L}$.

By Proposition \ref{p:mMy_subset_mMx}, there exists a neighborhood $Z'$ of $z_0$ such that $\mP(z')\subset \mP(z_0)$ for each $z'\in Z'$. As noted in the remark above Proposition \ref{p:mMy_subset_mMx}, the family of the selection functions of $d\Pi_S(z_0)$ includes that of $d\Pi_S(z')$ for each $z'\in Z'$. Thus, for each $L'\in \mathbb{L}$ and $z'\in Z'$, any selection function of $\Upsilon(z',L')$ is also a selection function of $\Upsilon(z_0,L')$, and the coherent orientation of $\Upsilon(z',L')$ follows from the coherent orientation of $\Upsilon(z_0,L')$.
\end{proof}


\begin{proposition}\label{p:invertible}
Under Assumptions \ref{assu1} and \ref{assu2},
\[
\lim_{n\to\infty} \Prob\{d(f_n)_S(z_n) \text{ is an invertible linear map}\} =1.
\]
\end{proposition}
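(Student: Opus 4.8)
The plan is to combine Lemma \ref{l:local_homeomorphism} with the almost sure convergence statements already established. First I would recall from the proof of Lemma \ref{l:local_homeomorphism} that for each $z'\in\Rset^q$, the B-derivative $d\Pi_S(z')$ equals the Euclidean projector onto the critical cone to $S$ at $z'$, and that
\[
d(f_n)_S(z_n)(h) = df_n(x_n)\bigl(d\Pi_S(z_n)(h)\bigr) + h - d\Pi_S(z_n)(h) = \Upsilon(z_n, df_n(x_n))(h)
\]
by the chain rule for B-differentiability applied to $(f_n)_S = f_n\circ\Pi_S + (\id-\Pi_S)$, exactly as in \eqref{eqn:d_f0_S_h}. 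Thus $d(f_n)_S(z_n)$ is the map $\Upsilon(z_n, df_n(x_n))$ in the notation of Lemma \ref{l:local_homeomorphism}.

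Next I would verify that the arguments $(z_n, df_n(x_n))$ eventually land in the neighborhoods $(Z', \mathbb{L})$ supplied by Lemma \ref{l:local_homeomorphism}, with probability tending to $1$. For the first coordinate, Theorem \ref{t:asy_dis} gives $z_n\to z_0$ almost surely, so $z_n\in Z'$ for all $n$ past some a.s.-finite $N_\w$; hence $\Prob\{z_n\in Z'\}\to 1$. For the second coordinate, I would use the triangle-inequality bound already deployed in the proof of Theorem \ref{t:d_f0_S_d_fN_S}, namely $\|df_n(x_n) - L\| \le \|df_n(x_n) - df_0(x_n)\| + \|df_0(x_n) - df_0(x_0)\|$, recalling $L = df_0(x_0)$. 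The first term goes to $0$ a.s.\ because $f_n\to f_0$ a.s.\ in $C^1(X,\Rset^q)$ for a compact $X$ containing $x_0$ in its interior (Assumption \ref{assu1}) and $x_n\in X$ eventually; the second goes to $0$ a.s.\ because $x_n\to x_0$ a.s.\ and $f_0$ is continuously differentiable. So $df_n(x_n)\to L$ a.s., whence $\Prob\{df_n(x_n)\in\mathbb{L}\}\to 1$.

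Finally I would intersect the two events: with probability tending to $1$ we have both $z_n\in Z'$ and $df_n(x_n)\in\mathbb{L}$, and on that event Lemma \ref{l:local_homeomorphism} tells us $\Upsilon(z_n, df_n(x_n)) = d(f_n)_S(z_n)$ is a global homeomorphism from $\Rset^q$ to $\Rset^q$. But a global homeomorphism that is also piecewise linear with the \emph{single} selection function $df_n(x_n)$ (which occurs precisely because $\mP(z_n)\subset\mP(z_0)$ need not force a single cell; rather, one should argue that coherent orientation plus the homeomorphism property on each selection matrix forces each selection matrix to be invertible) — here the cleaner route is simply: a global homeomorphism of $\Rset^q$ that is piecewise linear has all selection matrices of a common nonzero sign of determinant, so in particular $d(f_n)_S(z_n)$, being one such map, is an invertible \emph{linear} map when $z_n$ lies in the interior of a $q$-cell of its own subdivision $\mP(z_n)$; more directly, since $\Upsilon(z_n, df_n(x_n))$ is coherently oriented and $z_0$ lies in the interior of $|\mP(z_0)|$, for $z_n$ close enough to $z_0$ one can further arrange $z_n$ to lie in the relative interior of a single cell, making $d\Pi_S(z_n)$ linear and hence $d(f_n)_S(z_n)$ linear and invertible. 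I expect the main subtlety to be this last point: passing from ``$\Upsilon(z_n, df_n(x_n))$ is a global homeomorphism'' to ``$d(f_n)_S(z_n)$ is an invertible \emph{linear} map'' requires noting that $z_n$, being in the interior of the underlying set $|\mP(z_n)|$ and (for large $n$) in $Z'$, can be taken to lie in the interior of some $q$-cell — at which point $d\Pi_S(z_n)$ is the identity or more generally a linear projector with $\Upsilon$ reducing to a single invertible matrix — so one may need a short argument (or an appeal to $z_n$ being in the interior of some cell with probability $1$, since the union of lower-dimensional cells has Lebesgue measure zero and $z_n$ has a continuous limiting distribution) to complete the reduction.
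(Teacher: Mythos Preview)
Your invertibility argument via Lemma \ref{l:local_homeomorphism} is exactly what the paper does: show $z_n\to z_0$ and $df_n(x_n)\to L$ almost surely, then conclude $d(f_n)_S(z_n)=\Upsilon(z_n,df_n(x_n))$ is a global homeomorphism for large $n$.

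The gap is in the \emph{linearity} half, which you correctly flag as the subtlety but do not resolve. Your suggested fix --- ``the union of lower-dimensional cells has Lebesgue measure zero and $z_n$ has a continuous limiting distribution'' --- does not work as stated. The limiting distribution of $z_n$ itself is the point mass at $z_0$, and in the interesting case ($k\ge 2$) $z_0$ lies exactly on the boundary $R$ of the $q$-cells, so the measure-zero observation alone cannot rule out $z_n\in R$. Nor can you simply assert that $z_n$ is absolutely continuous for finite $n$; nothing in the assumptions guarantees that.

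The paper's remedy is to pass to the rescaled variable. Locally near $z_0$ one has $R\cap Z_0 = (z_0 + T_R(z_0))\cap Z_0$, and since $T_R(z_0)$ is a cone, $z_n\in Z_0\cap R$ forces $\sqrt{n}(z_n-z_0)\in T_R(z_0)$ for every $n$. Now apply the weak convergence \eqref{eqn:zN_dist_cov}: $T_R(z_0)$ is a finite union of polyhedral cones of dimension $<q$, and $(L_K)^{-1}(Y_0)$ puts zero mass on it (because $L_K$ is a homeomorphism and $Y_0$ is Gaussian), so $\limsup_n \Prob\{\sqrt{n}(z_n-z_0)\in T_R(z_0)\}=0$ by Portmanteau. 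Combined with $\Prob\{z_n\notin Z_0\}\to 0$, this gives $\Prob\{z_n\in R\}\to 0$, hence $d\Pi_S(z_n)$ is linear with probability tending to $1$. The scale-invariance of the cone is what lets you transfer the question from $z_n$ to $\sqrt{n}(z_n-z_0)$, and that step is what your sketch is missing.
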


\begin{proof}
By the chain rule,
\[
d (f_n)_S (z_n) (h)  = d f_n (x_n) (d \Pi_S(z_n)(h)) + h - d \Pi_S(z_n)(h) \text{ for each } h\in \Rset^q.
\]
The linearity of $d (f_n)_S (z_n)$ depends on the linearity of $d\Pi_S(z_n)$. The latter function is linear whenever $z_n$ belongs to the interior of an $q$-cell in the normal manifold of $S$. Let $R$ be the union of boundaries of all the $q$-cells; it follows that $R$ is the union of finitely many polyhedral convex sets of dimensions less than $q$. Consider the tangent cone $T_R(z_0)$, which is defined in the same way as $T_S(x)$ in \eqref{eqn:def_TSx} if $z_0\in R$ and is the empty set if $z_0\not\in R$.
Let $Z_0$ be a neighborhood of $z_0$ such that
$Z_0 \cap R = Z_0 \cap (z_0+T_R(z_0))$; we have
\begin{equation}\label{eqn:TR_z0}
\sqrt{n}(z-z_0) \in T_R(z_0) \text{ for each } z\in Z_0\cap R \text{ and each integer }n.
\end{equation}
The fact that $z_n$ converges to $z_0$ almost surely implies
\[
\lim_{n\to\infty} \Prob \{z_n\not\in Z_0\} = 0.
\]
On the other hand, \eqref{eqn:TR_z0} implies
\[
\begin{split}
\Prob \{z_n\in Z_0\cap R\} \le \Prob \{\sqrt{n}(z_n-z_0)\in T_R(z_0)\}.
\end{split}
\]
It follows from \eqref{eqn:zN_dist_cov} and the fact that $L_K$ is a piecewise linear homeomorphism that
\[
\begin{split}
\lim_{n\to\infty} \Prob \{\sqrt{n}(z_n-z_0)\in T_R(z_0)\}\le \Prob \{(L_K)^{-1}(Y_0) \in T_R(z_0)\}=0.
\end{split}
\]
Because $
\Prob \{z_n\in R\} \le \Prob \{z_n\not\in Z_0\} + \Prob \{z_n\in Z_0\cap R\}$, we have proved
\begin{equation}\label{eqn:prob_R}
\lim_{n\to\infty} \Prob \{z_n\in R\} = 0,
\end{equation}
which implies that the probability for $d(f_n)_S(z_n)$ to be a linear function converges to 1 as $n$ goes to $\infty$.

Next, choose neighborhoods $Z'$ of $z_0$ and $\mathbb{L}$ of $L$ as in Lemma \ref{l:local_homeomorphism}. Since $z_n$ converges to $z_0$ almost surely, $z_n$ belongs to $Z'$ almost surely for sufficiently large $n$. It was shown in the proof of Theorem \ref{t:d_f0_S_d_fN_S} that $d f_n(x_n)$ almost surely converges to $d f_0(x_0)$. Consequently, $d f_n(x_n)$ belongs to $\mathbb{L}$ almost surely for sufficiently large $n$. By Lemma \ref{l:local_homeomorphism}, $d(f_n)_S(z_n)$ is a global homeomorphism almost surely for sufficiently large $n$, so the probability for $d(f_n)_S(z_n)$ to be an invertible function converges to 1 as $n$ goes to $\infty$. The conclusion of the proposition follows by combining the linearity and invertibility.
\end{proof}

The random variable $Y_0$ in \eqref{eqn:zN_dist_cov_dfN} has covariance matrix $\Sigma_0$, which depends on the true solution $x_0$. In computing confidence regions and intervals we will replace $\Sigma_0$ by $\Sigma_n$, the sample covariance matrix of $\{F(x_n, \xi^i)\}_{i=1}^n$. The lemma below supports such a replacement. 

\begin{lemma}\label{l:Sigma_n_Sigma_0}
Suppose that Assumptions \ref{assu1} and \ref{assu2} hold. The matrix $\Sigma_n$ converges to $\Sigma_0$ almost surely as $n \to\infty$.
\end{lemma}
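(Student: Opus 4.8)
The plan is to show that $\Sigma_n$ differs from the "idealized" sample covariance matrix based on the true solution by a term that vanishes almost surely, and that the latter converges to $\Sigma_0$ by the strong law of large numbers. Write $\Sigma_n$ explicitly as the sample covariance of $\{F(x_n,\xi^i)\}_{i=1}^n$, namely
\[
\Sigma_n = \frac{1}{n}\sum_{i=1}^n \bigl(F(x_n,\xi^i) - \bar F_n\bigr)\bigl(F(x_n,\xi^i) - \bar F_n\bigr)^T,
\]
where $\bar F_n = n^{-1}\sum_{i=1}^n F(x_n,\xi^i) = f_n(x_n)$. Introduce the comparison matrix
\[
\widetilde\Sigma_n = \frac{1}{n}\sum_{i=1}^n \bigl(F(x_0,\xi^i) - \bar G_n\bigr)\bigl(F(x_0,\xi^i) - \bar G_n\bigr)^T, \qquad \bar G_n = \frac{1}{n}\sum_{i=1}^n F(x_0,\xi^i) = f_n(x_0).
\]

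The first step is to show $\widetilde\Sigma_n \to \Sigma_0$ almost surely. This is the classical consistency of the sample covariance matrix: by Assumption \ref{assu1}(a), $E\|F(x_0,\xi)\|^2 < \infty$, so the strong law of large numbers applies to $n^{-1}\sum_i F(x_0,\xi^i) \to f_0(x_0)$ and to $n^{-1}\sum_i F(x_0,\xi^i)F(x_0,\xi^i)^T \to E[F(x_0,\xi)F(x_0,\xi)^T]$ entrywise, and expanding $\widetilde\Sigma_n$ gives $\widetilde\Sigma_n = n^{-1}\sum_i F(x_0,\xi^i)F(x_0,\xi^i)^T - \bar G_n \bar G_n^T \to E[F(x_0,\xi)F(x_0,\xi)^T] - f_0(x_0)f_0(x_0)^T = \Sigma_0$ almost surely.

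The second step is to bound $\|\Sigma_n - \widetilde\Sigma_n\|$. Using the Lipschitz bound in Assumption \ref{assu1}(c), $\|F(x_n,\xi^i) - F(x_0,\xi^i)\| \le C(\xi^i)\|x_n - x_0\|$ for a.e.\ $\omega$. Writing $F(x_n,\xi^i) - \bar F_n = (F(x_0,\xi^i) - \bar G_n) + \bigl((F(x_n,\xi^i)-F(x_0,\xi^i)) - (\bar F_n - \bar G_n)\bigr)$ and expanding the rank-one products, every term in $\Sigma_n - \widetilde\Sigma_n$ contains at least one factor of the form $(F(x_n,\xi^i)-F(x_0,\xi^i))$ or $(\bar F_n - \bar G_n)$, each of which is bounded in norm by $\|x_n-x_0\|$ times $C(\xi^i)$ or $n^{-1}\sum_i C(\xi^i)$ respectively. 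Hence $\|\Sigma_n - \widetilde\Sigma_n\|$ is bounded by a constant times $\|x_n - x_0\|\bigl(n^{-1}\sum_i C(\xi^i)\bigr)\bigl(\text{uniformly tight quantities}\bigr)$; more precisely, one obtains a bound of the form $\|\Sigma_n - \widetilde\Sigma_n\| \le a_n \|x_n - x_0\| + b_n \|x_n - x_0\|^2$ where $a_n, b_n$ are sums of empirical averages of $C(\xi^i)$ and $\|F(x_0,\xi^i)\|$ that converge almost surely to finite limits by the strong law of large numbers (using $E C^2 < \infty$ and $E\|F(x_0,\xi)\|^2 < \infty$, hence $E C < \infty$ and $E\|F(x_0,\xi)\| < \infty$). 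Since $x_n \to x_0$ almost surely by Theorem \ref{t:asy_dis}, it follows that $\|\Sigma_n - \widetilde\Sigma_n\| \to 0$ almost surely. Combining the two steps via the triangle inequality gives $\Sigma_n \to \Sigma_0$ almost surely.

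The main obstacle is the bookkeeping in the second step: one must carefully expand the difference of the two empirical covariance matrices into a sum of cross terms, verify that each cross term carries a factor that is $O(\|x_n-x_0\|)$ uniformly over $i$ after summing, and keep all the empirical averages of $C(\xi^i)$ and $\|F(x_0,\xi^i)\|$ under control via the strong law of large numbers. None of this is deep, but it is the only place where the argument is not immediate; everything else follows from Assumption \ref{assu1}, the strong law, and the almost sure convergence $x_n \to x_0$ already established in Theorem \ref{t:asy_dis}.
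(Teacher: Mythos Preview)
Your proof is correct. Both steps work as stated: the first is the classical SLLN for the sample covariance at the fixed point $x_0$, and in the second step the cross terms are controlled by empirical averages of $C(\xi^i)\|F(x_0,\xi^i)\|$ and $C(\xi^i)^2$, each of which has finite expectation by Cauchy--Schwarz and the square-integrability hypotheses in Assumption~\ref{assu1}, so the SLLN applies and the bound collapses to zero once $x_n\to x_0$.

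The paper takes a different and shorter route. Rather than freezing the evaluation point at $x_0$ and using the Lipschitz estimate to transfer, it invokes a uniform law of large numbers: under Assumption~\ref{assu1} the empirical second-moment function $x\mapsto n^{-1}\sum_i F(x,\xi^i)F(x,\xi^i)^T$ converges to $x\mapsto E[F(x,\xi)F(x,\xi)^T]$ almost surely \emph{uniformly} on compact subsets of $O$ (the paper cites \cite[Theorem 7.48]{sha.den.rus:sp}), and then simply plugs in the a.s.\ convergent sequence $x_n$. The same uniform-LLN argument handles $f_n(x_n)\to f_0(x_0)$. Your approach is more self-contained, since it needs only the ordinary SLLN and the pointwise Lipschitz bound rather than the uniform-convergence machinery; the paper's approach is quicker because that machinery has already been set up (and used) earlier in Section~\ref{s:asy}. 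In effect you are re-deriving, for this specific functional, the conclusion of the uniform LLN from its hypotheses.
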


\begin{proof}
Under the assumptions, $\frac{1}{n} \sum_{i=1}^n F(x,\xi^i(\w)) F(x,\xi^i(\w))^T$ converges to $E[F(x,\xi) F(x,\xi)^T]$ almost surely uniformly on each compact subset $X$ of $O$ (see, e.g., \cite[Theorem 7.48]{sha.den.rus:sp}). Since $x_n$ converges to $x_0$ almost surely, we have
\[\lim_{n\to\infty} \frac{1}{n} \sum_{i=1}^n F(x_n,\xi^i(\w)) F(x_n,\xi^i(\w))^T = E[F(x_0,\xi) F(x_0,\xi)^T] \text{ almost surely}.\]
Similarly $f_n(x_n)$ converges to $f_0(x_0)$ almost surely as $n \to \infty$.
This proves the lemma.
\end{proof}

%
%
%

%

\section{Confidence regions and simultaneous confidence intervals}\label{s:sim_ci}

This section provides formulas for confidence regions of $z_0$ that are computable from $z_n$, and provides a method to compute simultaneous confidence intervals for components of $z_0$. We treat two cases separately: Theorem \ref{t:zN_weakconv_nonsinuglar} considers situations in which $\Sigma_0$ (the covariance matrix of $F(x_0,\xi)$) is nonsingular, and Theorem \ref{t:zN_weakconv_sinuglar} handles situations in which $\Sigma_0$ is singular. The confidence region is given in \eqref{eqn:conf_reg_nonsingular} or \eqref{eqn:R_n_epsilon} for the two cases respectively.
We use $\chi^2_l$ to denote a $\chi^2$ random variable with $l$ degrees of freedom, and use $\chi^2_l(\alpha)$ to denote the number that satisfies $P( \chi^2_l > \chi^2_l(\alpha)) = \alpha$ for $\alpha\in [0,1]$. For the rest of this section, let $\alpha\in [0,1]$ be fixed.

\begin{theorem}\label{t:zN_weakconv_nonsinuglar}
Suppose that Assumptions \ref{assu1} and \ref{assu2} hold, and that $\Sigma_0$ is nonsingular. For almost every $\w\in \Omega$, there exists an integer $N_\w$ such that $\Sigma_n$ is nonsingular for $n\ge N_\w$. Moreover,
\begin{equation}\label{eqn:dfNzN-weakconv}
-\sqrt{n}\Sigma_n^{-1/2}[d(f_n)_S(z_n) (z_0 - z_n)] \Rightarrow \mN(0, I_q),
\end{equation}
and the probability for $z_0$ to belong to the set
\begin{equation}\label{eqn:conf_reg_nonsingular}
\left\{z\in \Rset^q \left|
 n \big[ d(f_n)_S(z_n) (z - z_n) \big]^T
 \Sigma_n^{-1}
 \big[d(f_n)_S(z_n) (z - z_n)\big]
  \le \chi^2_{q}(\alpha)\right.
  \right\}
\end{equation}
converges to $1-\alpha$ as $n\to \infty$.
\end{theorem}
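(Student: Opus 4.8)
The plan is to obtain all three assertions of the theorem as consequences of Theorem~\ref{t:d_f0_S_d_fN_S} and Lemma~\ref{l:Sigma_n_Sigma_0}, combined with Slutsky's theorem and the continuous mapping theorem; no new analytic estimates are needed. I would first dispose of the nonsingularity claim. Since $\Sigma_0$ is nonsingular, $\det\Sigma_0\neq 0$, and Lemma~\ref{l:Sigma_n_Sigma_0} gives $\Sigma_n\to\Sigma_0$ almost surely; by continuity of the determinant, $\det\Sigma_n\to\det\Sigma_0\neq 0$ almost surely, so for almost every $\w$ there is an integer $N_\w$ with $\Sigma_n$ nonsingular---hence positive definite, being a symmetric positive semidefinite matrix with nonzero determinant---for all $n\ge N_\w$. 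On this eventual event $\Sigma_n^{-1/2}$ is well defined, and since matrix inversion and the principal square root are continuous on the cone of positive definite matrices, $\Sigma_n^{-1/2}\to\Sigma_0^{-1/2}$ and $\Sigma_n^{-1}\to\Sigma_0^{-1}$ almost surely. For the finitely many $n$ on which $\Sigma_n$ happens to be singular one sets $\Sigma_n^{-1/2}:=I_q$ by convention; this occurs on an event whose probability tends to $0$, so it affects none of the weak-convergence statements.

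For \eqref{eqn:dfNzN-weakconv}: Theorem~\ref{t:d_f0_S_d_fN_S} gives $-\sqrt{n}\,d(f_n)_S(z_n)(z_0-z_n)\Rightarrow Y_0$ with $Y_0\sim\mN(0,\Sigma_0)$, while $\Sigma_n^{-1/2}\to\Sigma_0^{-1/2}$ almost surely, hence in probability; Slutsky's theorem then yields
\[
-\sqrt{n}\,\Sigma_n^{-1/2}\big[d(f_n)_S(z_n)(z_0-z_n)\big]\ \Rightarrow\ \Sigma_0^{-1/2}Y_0,
\]
and $\Sigma_0^{-1/2}Y_0\sim\mN\big(0,\Sigma_0^{-1/2}\Sigma_0\Sigma_0^{-1/2}\big)=\mN(0,I_q)$, which is \eqref{eqn:dfNzN-weakconv}. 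For the coverage statement, let $V_n$ denote the random vector on the left-hand side of \eqref{eqn:dfNzN-weakconv} and put $W_n:=\|V_n\|_2^2=V_n^T V_n$. Since $\Sigma_n^{-1/2}$ is symmetric, $W_n=n\,[d(f_n)_S(z_n)(z_0-z_n)]^T\Sigma_n^{-1}[d(f_n)_S(z_n)(z_0-z_n)]$, which is exactly the left-hand side of the inequality defining the set \eqref{eqn:conf_reg_nonsingular} evaluated at $z=z_0$. The map $v\mapsto\|v\|_2^2$ is continuous, so by the continuous mapping theorem $W_n\Rightarrow\|\mN(0,I_q)\|_2^2$, a $\chi^2_q$ random variable. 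Hence $z_0$ belongs to the set \eqref{eqn:conf_reg_nonsingular} if and only if $W_n\le\chi^2_q(\alpha)$, so the probability of that event equals $\Prob\{W_n\le\chi^2_q(\alpha)\}$; since $W_n\Rightarrow\chi^2_q$ and $\chi^2_q(\alpha)$ is a continuity point of the distribution function of $\chi^2_q$, this probability converges to $\Prob\{\chi^2_q\le\chi^2_q(\alpha)\}=1-\alpha$, as required.

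Because the substantial work has already been done in Theorem~\ref{t:d_f0_S_d_fN_S}, there is no genuine obstacle here; the only points demanding a word of care are the possible non-definedness of $\Sigma_n^{-1/2}$ for small $n$ (harmless, as $\Sigma_n$ is eventually nonsingular almost surely) and the algebraic identity $v^T\Sigma_n^{-1}v=\|\Sigma_n^{-1/2}v\|_2^2$, which lets one read the quadratic form in \eqref{eqn:conf_reg_nonsingular} as the squared Euclidean norm of the asymptotically standard normal vector in \eqref{eqn:dfNzN-weakconv}. Note in particular that invertibility of $d(f_n)_S(z_n)$ (Proposition~\ref{p:invertible}) is not needed for this argument---it serves only the geometric description of the region as a single ellipsoid---since $d(f_n)_S(z_n)(z_0-z_n)$ is meaningful for any B-differentiable $(f_n)_S$ and the conclusion of Theorem~\ref{t:d_f0_S_d_fN_S} holds regardless.
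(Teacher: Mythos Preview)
Your proof is correct and follows essentially the same approach as the paper: the paper's own proof simply invokes Lemma~\ref{l:Sigma_n_Sigma_0} for eventual nonsingularity, says \eqref{eqn:dfNzN-weakconv} ``follows from \eqref{eqn:zN_dist_cov_dfN},'' and then notes the quadratic form weakly converges to $\chi^2_q$. You have just made explicit the standard probabilistic machinery (Slutsky, continuous mapping, continuity of the $\chi^2_q$ cdf at $\chi^2_q(\alpha)$) that the paper leaves implicit.
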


\begin{proof}
Since $\Sigma_n$ converges to $\Sigma_0$ almost surely (Lemma \ref{l:Sigma_n_Sigma_0}), it is nonsingular for sufficiently large $n$ almost surely. Equation \eqref{eqn:dfNzN-weakconv} follows from \eqref{eqn:zN_dist_cov_dfN}. From \eqref{eqn:dfNzN-weakconv} the random variable
\[n \big[ d(f_n)_S(z_n) (z_0 - z_n) \big]^T
 \Sigma_n^{-1}
 \big[d(f_n)_S(z_n) (z_0 - z_n)\big]\]
weakly converges to a $\chi^2_q$ random variable, so  the set \eqref{eqn:conf_reg_nonsingular} contains $z_0$ with probability converging to $1-\alpha$.
\end{proof}

\begin{theorem}\label{t:zN_weakconv_sinuglar}
Suppose that Assumptions \ref{assu1} and \ref{assu2} hold, and that $\Sigma_0$ is singular. Let $\rho>0$ be the minimum of all positive eigenvalues of $\Sigma_0$, and let $l$ be the number of positive eigenvalues of $\Sigma_0$ counted with regard to their algebraic multiplicities. Let $\rho_0$ satisfy $0<\rho_0<\rho$. Decompose $\Sigma_n$ as
\begin{equation}\label{eqn:decom_Sigman}
\Sigma_n=U_n^T \Delta_n U_n
\end{equation}
where $U_n$ is an orthogonal $q\times q$ matrix, and $\Delta_n$ is a diagonal matrix with monotonically decreasing elements.
Let $D_n$ be the upper-left submatrix of $\Delta_n$ whose diagonal elements are at least $\rho_0$.
Let $l_n$ be the number of rows in $D_n$, $(U_n)_1$ be the submatrix of $U_n$ that consists of its first $l_n$ rows, and $(U_n)_2$ be the submatrix that consists of the remaining rows of $U_n$.
Then for almost every $\w$ the equality $l_n=l$ holds for sufficiently large $n$. Moreover,
\begin{equation}\label{eqn:zN_dist_cov_chi2}
 n \big[ d(f_n)_S(z_n) (z_0 - z_n) \big]^T
(U_n)_1^T D_n^{-1}(U_n)_1
 \big[d(f_n)_S(z_n) (z_0 - z_n)\big] \Rightarrow \chi^2_l
\end{equation}
and
\begin{equation}\label{eqn:zN_dist_cov_0}
n [d(f_n)_S(z_n) (z_0 - z_n)]^T (U_n)_2^T (U_n)_2 [d(f_n)_S(z_n) (z_0 - z_n)] \Rightarrow 0.
\end{equation}
For each $\epsilon>0$, the set
\begin{equation}\label{eqn:R_n_epsilon}
R_{n,\epsilon}=
\left\{z\in \Rset^q \left|
\begin{array}{l}
 n \big[ d(f_n)_S(z_n) (z - z_n) \big]^T
(U_n)_1^T D_n^{-1}(U_n)_1
 \big[d(f_n)_S(z_n) (z - z_n)\big]
  \le \chi^2_{l_n}(\alpha) \\ \\
 \|\sqrt{n}(U_n)_2 [d(f_n)_S(z_n) (z - z_n)]\|_\infty
 \le \epsilon \end{array}\right.
\right\}
\end{equation}
contains $z_0$ with probability converging to $1-\alpha$.
\end{theorem}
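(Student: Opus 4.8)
The plan is to derive every assertion from the weak convergence in Theorem~\ref{t:d_f0_S_d_fN_S} together with the almost sure convergence $\Sigma_n\to\Sigma_0$ of Lemma~\ref{l:Sigma_n_Sigma_0}. Set $W_n:=-\sqrt n\,d(f_n)_S(z_n)(z_0-z_n)$, so that $W_n\Rightarrow Y_0\sim\mN(0,\Sigma_0)$, and observe that, by the sign-invariance of quadratic forms and of norms, the three expressions obtained by taking $z=z_0$ in \eqref{eqn:zN_dist_cov_chi2}, in \eqref{eqn:zN_dist_cov_0}, and in the definition of $R_{n,\epsilon}$ in \eqref{eqn:R_n_epsilon} are precisely $W_n^TQ_nW_n$, $W_n^TP_nW_n$, and $\|(U_n)_2W_n\|_\infty$, where $Q_n:=(U_n)_1^TD_n^{-1}(U_n)_1$ and $P_n:=(U_n)_2^T(U_n)_2$. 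Hence it suffices to understand the limiting behaviour of the random matrices $Q_n$ and $P_n$ and then invoke standard weak-convergence machinery.

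First I would fix the rank. Since $\Sigma_n\to\Sigma_0$ a.s., Weyl's inequality shows the ordered eigenvalues of $\Sigma_n$ converge to those of $\Sigma_0$; as the $l$th largest eigenvalue of $\Sigma_0$ equals $\rho>\rho_0$ while the $(l+1)$st equals $0$, for a.e.\ $\w$ there is $N_\w$ with exactly $l$ eigenvalues of $\Sigma_n$ at least $\rho_0$ for all $n\ge N_\w$; thus $l_n=l$ eventually a.s., and also $\chi^2_{l_n}(\alpha)=\chi^2_l(\alpha)$ eventually a.s. Next, for such $n$, put $A_n:=(U_n)_1^TD_n(U_n)_1$ (using \eqref{eqn:decom_Sigman}); then $\Sigma_n-A_n$ is supported on the eigendirections of $\Sigma_n$ with eigenvalue below $\rho_0$, so its spectral norm equals the $(l+1)$st largest eigenvalue of $\Sigma_n$, which tends to $0$, giving $A_n\to\Sigma_0$ a.s. Because $A_n$ and $\Sigma_0$ both have rank $l$, continuity of the Moore--Penrose pseudoinverse on the set of matrices of fixed rank yields $Q_n=A_n^+\to\Sigma_0^+$ a.s.\ and $(U_n)_1^T(U_n)_1=A_nA_n^+\to\Sigma_0\Sigma_0^+=:P_0$ a.s., whence $P_n=I_q-(U_n)_1^T(U_n)_1\to I_q-P_0$ a.s., where $P_0$ is the orthogonal projector onto $\range\Sigma_0$. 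Since $(I_q-P_0)\Sigma_0(I_q-P_0)=0$, the random vector $Y_0$ satisfies $(I_q-P_0)Y_0=0$ a.s.

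Now I would assemble the conclusions. By Slutsky's theorem $(W_n,Q_n,P_n)\Rightarrow(Y_0,\Sigma_0^+,I_q-P_0)$, and the continuous mapping theorem applied to $(w,Q,P)\mapsto(w^TQw,\,w^TPw)$ gives $W_n^TQ_nW_n\Rightarrow Y_0^T\Sigma_0^+Y_0$ and $W_n^TP_nW_n\Rightarrow Y_0^T(I_q-P_0)Y_0=\|(I_q-P_0)Y_0\|^2=0$; a routine spectral-decomposition computation ($Y_0$ equals in distribution a standard normal scaled by $\Sigma_0^{1/2}$ in its eigenbasis) shows $Y_0^T\Sigma_0^+Y_0\sim\chi^2_l$, which together with the previous display gives \eqref{eqn:zN_dist_cov_chi2} and \eqref{eqn:zN_dist_cov_0}. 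For the coverage claim, note that $z_0\in R_{n,\epsilon}$ holds exactly when $W_n^TQ_nW_n\le\chi^2_{l_n}(\alpha)$ and $\|(U_n)_2W_n\|_\infty\le\epsilon$; using $\|(U_n)_2W_n\|_\infty\le(W_n^TP_nW_n)^{1/2}\Rightarrow0$ and the eventual a.s.\ equality $\chi^2_{l_n}(\alpha)=\chi^2_l(\alpha)$, the pair $(W_n^TQ_nW_n,\|(U_n)_2W_n\|_\infty)$ converges weakly to $(\chi^2_l,0)$, so the portmanteau theorem applied to the rectangle $(-\infty,\chi^2_l(\alpha)]\times(-\infty,\epsilon]$ — whose boundary has probability $0$ under the limit because $\chi^2_l$ is atomless and $\epsilon>0$ — yields $\Prob\{z_0\in R_{n,\epsilon}\}\to\Prob\{\chi^2_l\le\chi^2_l(\alpha)\}=1-\alpha$.

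The step I expect to be the main obstacle is the second one: when $\Sigma_0$ has repeated eigenvalues the eigenvector matrix $U_n$ need be neither unique nor convergent, so the limits $Q_n\to\Sigma_0^+$ and $P_n\to I_q-P_0$ cannot be read off from $U_n$ directly and must instead be obtained through the rank stabilization $l_n=l$ together with the constant-rank continuity of the pseudoinverse. Once those matrix limits are in hand, the remaining arguments are routine applications of Slutsky's theorem, the continuous mapping theorem, and the portmanteau theorem.
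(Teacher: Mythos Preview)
Your argument is correct and follows the same overall route as the paper: both start from $W_n\Rightarrow Y_0$ (Theorem~\ref{t:d_f0_S_d_fN_S}) and $\Sigma_n\to\Sigma_0$ a.s.\ (Lemma~\ref{l:Sigma_n_Sigma_0}), stabilize $l_n=l$ via eigenvalue continuity, establish $Q_n\to\Sigma_0^+$ and $P_n\to I_q-P_0$ a.s., and then apply Slutsky and continuous mapping. The difference is in how the two matrix limits are obtained. The paper identifies $A_n=(U_n)_1^TD_n(U_n)_1$ as the best rank-$l$ Frobenius approximation of $\Sigma_n$, invokes continuity of that approximation together with a pseudoinverse perturbation result of Stewart to get $Q_n=\hat\Sigma_n^+\to\Sigma_0^+$, and separately cites a subspace-angle perturbation theorem of Li to deduce $(U_n)_2^T(U_n)_2\to(U_0)_2^T(U_0)_2$. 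Your route is more self-contained and unified: you bound $\|\Sigma_n-A_n\|$ directly by the $(l{+}1)$st eigenvalue of $\Sigma_n$, use constant-rank continuity of the Moore--Penrose inverse once to get $Q_n=A_n^+\to\Sigma_0^+$, and then read off $P_n=I_q-A_nA_n^+\to I_q-\Sigma_0\Sigma_0^+$ from the same convergence, avoiding the separate subspace-perturbation citation. Both approaches are valid; yours trades the external references for a single appeal to constant-rank pseudoinverse continuity, while the paper's treatment keeps the $Q_n$ and $P_n$ arguments logically independent.
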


\begin{proof}
Let $\rho$ and $l$ be as defined in this theorem, and conduct an eigen-decomposition of $\Sigma_0$ as
\begin{equation}\label{eqn:decom_Sigma0}
\Sigma_0=U_0^T \begin{bmatrix} D_0 & 0 \\ 0 & 0 \end{bmatrix} U_0 = \begin{bmatrix} (U_0)_1^T & (U_0)_2^T \end{bmatrix} \begin{bmatrix} D_0 & 0 \\ 0 & 0 \end{bmatrix} \begin{bmatrix} (U_0)_1 \\ (U_0)_2 \end{bmatrix}
\end{equation}
where $U_0$ is orthogonal, $D_0$ is diagonal with monotonically decreasing positive diagonal elements, and $(U_0)_1$ and $(U_0)_2$ contains the first $l$ and the last $q-l$ rows of $U_0$ respectively. From \eqref{eqn:zN_dist_cov_dfN} we have
\begin{equation}\label{eqn:zN-weakconv3}
-\sqrt{n}\begin{bmatrix} D_0^{-1/2} & 0 \\ 0 & I_{q-l} \end{bmatrix}U_0
[d(f_n)_S(z_n) (z_0 - z_n)] \Rightarrow \mN(0, I_l) \times 0.
\end{equation}

From Lemma \ref{l:Sigma_n_Sigma_0}, for a.e. $\w\in\Omega$ there exists an integer $N^1_\w$, such that each $n \ge N^1_\w$ satisfies
\begin{equation}\label{eqn:deltaN_ii}
   (\Delta_n)_{ii} > \rho_0 \text{ for each } i=1,\cdots, l \text{ and }
(\Delta_n)_{ii} < \rho_0 \text{ for each } i=l+1,\cdots, q.
\end{equation}
It follows that that $l_n=l$ for $n\ge N^1_\w$ and that $D_n$ converges to $D_0$ almost surely.

For $n\ge N^1_\w$, define
\[
\hat{\Sigma}_n=U_n^T \begin{bmatrix} D_n & 0 \\ 0 & 0\end{bmatrix} U_n \text{ and }
\hat{\Sigma}_n^+=U_n^T \begin{bmatrix} D_n^{-1} & 0 \\ 0 & 0\end{bmatrix} U_n=(U_n)_1^T D_n^{-1} (U_n)_1.\]
Because of \eqref{eqn:deltaN_ii}, $\hat{\Sigma}_n$ is the unique best approximation of $\Sigma_n$ in Frobenius norm among all matrices of rank $\le l$; see, e.g., \cite{chi:ppe,gol.hof.ste:gey,ste:ehs}.
 Since the unique best approximation of a matrix depends continuously on that matrix (by an application of \cite[Theorem 1.17]{rtr.wet:va}), and the best approximation of $\Sigma_0$ is itself,  $\hat{\Sigma}_n$ almost surely converges to $\Sigma_0$. As the pseudo-inverse of $\hat{\Sigma}_n$, $\hat{\Sigma}_n^+$ almost surely converges to the pseudo-inverse of $\Sigma_0$ (by an application of \cite[Corollary 3.5]{ste:ppi}).
This and \eqref{eqn:zN-weakconv3} imply \eqref{eqn:zN_dist_cov_chi2}.

Finally, an application of \cite[Theorem 3.1]{li:rpt} implies that the angle between the row spaces of $(U_0)_2$ and $(U_n)_2$ almost surely converges to zero as $n\to \infty$. In view of \cite[Equation (2.4)]{li:rpt} and \cite[Theorem 2.6.1]{gol.van:mc}, the matrix $(U_n)_2^T (U_n)_2$ converges to $(U_0)_2^T (U_0)_2$ almost surely. This and \eqref{eqn:zN-weakconv3} imply \eqref{eqn:zN_dist_cov_0}.

Note that the choice of the matrix $U_n$ in the decomposition of $\Sigma_n$ is non-unique, when $\Sigma_n$ has repeated eigenvalues. However, when  \eqref{eqn:deltaN_ii} holds the matrices $(U_n)_1^T D_n^{-1}(U_n)_1$ and $(U_n)_2^T (U_n)_2$ that appear in \eqref{eqn:zN_dist_cov_chi2} and \eqref{eqn:zN_dist_cov_0} are uniquely determined by $\Sigma_n$, and depend continuously on $\Sigma_n$.


Since $l_n=l$ almost surely for sufficiently large $n$, from \eqref{eqn:zN_dist_cov_chi2} we have
\begin{equation}\label{eqn:UN_DN_UN}
\lim_{n\to\infty}
\Prob\left\{
 n \big[ d(f_n)_S(z_n) (z_0 - z_n) \big]^T
(U_n)_1^T D_n^{-1}(U_n)_1
 \big[d(f_n)_S(z_n) (z_0 - z_n)\big]
  \le \chi^2_{l_n}(\alpha)\right\}=1-\alpha.
\end{equation}
By \eqref{eqn:zN_dist_cov_0} we have
\[
\lim_{n\to\infty}
\Prob\left\{
 n [d(f_n)_S(z_n) (z_0 - z_n)]^T (U_n)_2^T (U_n)_2 [d(f_n)_S(z_n) (z_0 - z_n)]
  \le \epsilon \right\}=1
\]
for each $\epsilon>0$. Since all norms are equivalent in a finite-dimensional space, we can rewrite the above equality as
\begin{equation}\label{eqn:UN_norm_infty}
\lim_{n\to\infty}
\Prob\left\{
 \sqrt{n} \|(U_n)_2 [d(f_n)_S(z_n) (z_0 - z_n)]\|_\infty
  \le \epsilon \right\}=1,
\end{equation}
where $\|\cdot\|_\infty$ denotes the $\infty$-norm.
In writing \eqref{eqn:UN_norm_infty}, we assume that $U_n$ is a measurable function of $\Sigma_n$ so that the set in consideration is a measurable set in $\Omega$. Equations \eqref{eqn:UN_DN_UN} and \eqref{eqn:UN_norm_infty} imply that the set $R_{n,\epsilon}$ contains $z_0$ with probability converging to $1-\alpha$.
\end{proof}

Since for each $\epsilon>0$ the set $R_{n,\epsilon}$ is an asymptotically exact $(1-\alpha)100\%$ confidence region for $z_0$, it is natural to ask whether the same is true about the set
\begin{equation}\label{eqn:R_n_0}
R_{n,0}=
\left\{z\in \Rset^q \left|
\begin{array}{l}
 n \big[ d(f_n)_S(z_n) (z - z_n) \big]^T
(U_n)_1^T D_n^{-1}(U_n)_1
 \big[d(f_n)_S(z_n) (z - z_n)\big]
  \le \chi^2_{l_n}(\alpha) \\ \\
 \sqrt{n}(U_n)_2 [d(f_n)_S(z_n) (z - z_n)]
  =0
 \end{array}\right.
\right\}.
\end{equation}
We cannot prove that $R_{n,0}$ contains $z_0$ with probability converging to $1-\alpha$, because the quantity
\[
\lim_{n\to\infty}
\Prob\left\{
 \sqrt{n} (U_n)_2 [d(f_n)_S(z_n) (z_0 - z_n)]
  = 0 \right\}
\]
may not equal 1. However, Proposition \ref{p:R_0} below shows for a fixed $n$ that the set-valued map $R_{n,\epsilon}$ is Lipschitz continuous on an interval $[0,\bar{\epsilon}]$ for some positive real number  $\bar{\epsilon}$. Accordingly, any open set that contains $R_{n,0}$ must include $R_{n,\epsilon}$ for a sufficiently small $\epsilon$. Since $R_{n,0}$ has a simpler geometric structure comparing to $R_{n,\epsilon}$, one may choose to use $R_{n,0}$ to approximate $R_{n,\epsilon}$ for computational efficiency.

\begin{proposition}\label{p:R_0}
Let $d(f_n)_S(z_n)$ be a fixed invertible linear function, $U_n$ a fixed orthogonal matrix, and $D_n$ a fixed $l_n\times l_n$ diagonal matrix with positive diagonal elements. There exist real numbers $\bar{\epsilon}>0$ and $\sigma>0$ such that
\[R_{n,\epsilon'} \subset R_{n,\epsilon} + \sigma |\epsilon-\epsilon'|\mB\]
whenever $0\le \epsilon \le \bar{\epsilon}$ and $0\le \epsilon' \le \bar{\epsilon}$, where $\mB$ denotes the closed unit ball in $\Rset^q$.
\end{proposition}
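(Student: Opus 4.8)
The plan is to write $R_{n,\epsilon}$ explicitly in a form where the $\epsilon$-dependence is isolated in a single scalar constraint, and then invoke a standard Lipschitz-stability result for polyhedral feasible regions under right-hand-side perturbation. Set $A = d(f_n)_S(z_n)$, which by hypothesis is an invertible $q\times q$ matrix, and let $w = A(z - z_n)$, so that the change of variables $z \mapsto w$ is an invertible affine map with fixed (i.e., $\epsilon$-independent) coefficients. In the $w$-coordinates, the first constraint defining $R_{n,\epsilon}$ is the fixed ellipsoidal inequality $n\, w^T (U_n)_1^T D_n^{-1}(U_n)_1 w \le \chi^2_{l_n}(\alpha)$, and the second is $\sqrt{n}\,\|(U_n)_2 w\|_\infty \le \epsilon$, i.e. the $2(q-l_n)$ linear inequalities $-\epsilon \le \sqrt{n}\,[(U_n)_2 w]_i \le \epsilon$ for $i = 1,\dots,q-l_n$. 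Thus $A(R_{n,\epsilon} - z_n)$ is the intersection of a fixed closed convex set $E$ (the ellipsoid) with a polyhedron $Q(\epsilon)$ whose only varying data is the right-hand side vector, which moves affinely (hence Lipschitz) in $\epsilon$.

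Next I would handle the ellipsoid. Because $E$ is a fixed convex set, I can replace it by a fixed polyhedron $E'$ that agrees with $E$ on the relevant region: since all the sets in question lie in a bounded region (the ellipsoid $E$ is compact, as $D_n^{-1}$ has positive diagonal and, after noting that on the subspace orthogonal to the row space of $(U_n)_2$ the ellipsoid constraint is genuinely bounding — if it is not bounding in some direction one intersects with a large fixed box, which is harmless for the containment we want), one loses nothing by outer-approximating $E$ by a fixed polyhedron $E'$ with $E \subset E'$ and $E' \cap Q(0)$ contained in a neighborhood of $E\cap Q(0)$ on which $E' = E$. The cleaner route, and the one I would actually take, is: $R_{n,\epsilon} \subset R_{n,\bar\epsilon}$ for $\epsilon \le \bar\epsilon$, and $R_{n,\bar\epsilon}$ is compact; so it suffices to establish the Lipschitz bound for $w$ ranging over the fixed compact set $A(R_{n,\bar\epsilon}-z_n)$, where the quadratic constraint is inactive in the perturbation analysis and only the linear constraints $\sqrt n\,\|(U_n)_2 w\|_\infty \le \epsilon$ are being moved.

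The core step is then a Hoffman-type / Lipschitzian-stability estimate: for the polyhedral system $\{w \in E : \|(U_n)_2 w\|_\infty \le \epsilon/\sqrt n\}$, as $\epsilon$ varies over $[0,\bar\epsilon]$, the solution set is Lipschitz continuous in the Hausdorff sense with a modulus depending only on the fixed matrices $(U_n)_2$ and on $E$, not on $\epsilon$. Concretely, given $w' \in R$-set for $\epsilon'$, one must produce $w \in R$-set for $\epsilon$ with $\|w - w'\| \le \sigma|\epsilon - \epsilon'|$; when $\epsilon \ge \epsilon'$ one takes $w = w'$ (the larger box contains the smaller), and when $\epsilon < \epsilon'$ one projects/contracts $w'$ toward the subspace $\ker (U_n)_2$ far enough to meet the tighter $\infty$-norm bound while staying in $E$ — the displacement needed is at most a fixed constant times $(\epsilon' - \epsilon)$ because $(U_n)_2$ is a fixed matrix with trivial kernel on its row space and $E$ contains a neighborhood of that subspace's relevant portion (again using compactness to get $\bar\epsilon$ small enough that the contracted point stays feasible for $E$). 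This is exactly the content of the classical result on Lipschitz continuity of solution maps of linear inequality systems under canonical perturbations; I would cite \cite[Theorem 1.17]{rtr.wet:va} or a Hoffman-bound reference to get the uniform modulus $\sigma$ and then pull the estimate back through the fixed invertible affine map $w \mapsto z = z_n + A^{-1}w$, absorbing $\|A^{-1}\|$ into $\sigma$.

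The main obstacle I anticipate is bookkeeping around boundedness: the ellipsoidal constraint in \eqref{eqn:R_n_epsilon} only controls $w$ in the row space of $(U_n)_1$, so $R_{n,\epsilon}$ itself could a priori be unbounded in directions of $\ker(U_n)_1 \cap \ker(U_n)_2$ — except that $(U_n)_1$ and $(U_n)_2$ together span $\Rset^q$, so that intersection is $\{0\}$ and $R_{n,\bar\epsilon}$ is in fact compact. Verifying this and choosing $\bar\epsilon$ small enough that the contraction argument keeps the ellipsoid constraint satisfied (equivalently, that the active-set structure near $R_{n,0}$ is stable for $\epsilon\in[0,\bar\epsilon]$) is the only delicate point; everything else is the standard polyhedral Lipschitz-stability machinery applied to a system whose matrix data is frozen and whose right-hand side depends affinely on $\epsilon$.
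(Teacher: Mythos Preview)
Your approach is correct but considerably more laborious than necessary, and the ``delicate point'' you flag at the end is in fact not delicate at all. The paper's proof applies one more step of coordinate change than you do: instead of stopping at $w = A(z-z_n)$, it passes to $u = U_n w = U_n\, d(f_n)_S(z_n)(z-z_n)$. Because $U_n$ is orthogonal with row blocks $(U_n)_1$ and $(U_n)_2$, in the $u$-coordinates the first constraint reads $n\, u_{1:l_n}^T D_n^{-1} u_{1:l_n} \le \chi^2_{l_n}(\alpha)$ and the second reads $\sqrt{n}\,\|u_{l_n+1:q}\|_\infty \le \epsilon$. These constrain \emph{disjoint} blocks of coordinates, so the transformed set is literally a Cartesian product $E \times B(\epsilon)$ of a fixed ellipsoid in $\Rset^{l_n}$ and the box $[-\epsilon/\sqrt{n},\epsilon/\sqrt{n}]^{q-l_n}$. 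Lipschitz continuity in $\epsilon$ is then immediate (and in fact global---no smallness of $\bar\epsilon$ is needed): given $u' \in E\times B(\epsilon')$, keep $u'_{1:l_n}$ and clip $u'_{l_n+1:q}$ componentwise to $[-\epsilon/\sqrt{n},\epsilon/\sqrt{n}]$.

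Your contraction step ``project $w'$ toward $\ker(U_n)_2$'' is exactly this clipping viewed in the $w$-coordinates, and the orthogonality of $(U_n)_1$ and $(U_n)_2$ means moving in the $(U_n)_2^T$ direction leaves $(U_n)_1 w$ unchanged---so the ellipsoid constraint is \emph{automatically} preserved, with no need for Hoffman bounds, compactness arguments, or a carefully chosen $\bar\epsilon$. Once you see the Cartesian product structure, the citation to polyhedral stability results and the hedging about active-set stability become unnecessary overhead.
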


\begin{proof}
After the coordinate transformation $U_n d(f_n)_S(z_n)$, one can view $R_{n,\epsilon}$ as a cartesian product of a fixed ellipsoid in $\Rset^{l_n}$ and a polyhedron in $\Rset^{q-l_n}$, with $\epsilon$ being the right hand side of the linear constraints defining the polyhedron. The Lipschitz continuity can be seen in the transformed space.
\end{proof}

It is shown in Proposition \ref{p:invertible} that $d(f_n)_S(z_n)$ is an invertible linear function with high probability. When this holds, the set in \eqref{eqn:conf_reg_nonsingular} and $R_{n,0}$ in \eqref{eqn:R_n_0} are ellipsoids in $\Rset^q$, and therefore can be described using their centers, principal directions, and semi-axes.
It is often desirable to provide simultaneous confidence intervals for components of $z_0$, as intervals are more convenient to describe, visualize, and interpret. We can compute these intervals by finding the minimum enclosing box of a confidence region, i.e., by computing the maximal and minimal values of $z_i$ for each $i=1,\cdots,q$ over the confidence region.

Since $\Sigma_0$ is unknown in practice, it is often difficult to directly check if $\Sigma_0$ is nonsingular or not. Instead, we conduct an eigen-decomposition of the sample covariance matrix $\Sigma_n$ as in \eqref{eqn:decom_Sigman}, and partition matrices $U_n$ and $\Delta_n$ based on positive or zero eigenvalues. If all eigenvalues are strictly positive (larger than a prescribed tolerance), then we use \eqref{eqn:conf_reg_nonsingular} as the confidence region. Otherwise, we use $R_{n,\epsilon}$ in \eqref{eqn:R_n_epsilon} for some positive $\epsilon$ or $R_{n,0}$ as the confidence region. In the latter case, the confidence region is still bounded because $d(f_n)_S(z_n)$ is invertible, and is flat in the directions represented by rows of $(U_n)_2[d(f_n)_S(z_n)]$. In particular, the set $R_{n,0}$ is a degenerate ellipsoid. One can still find the minimal enclosing box of the confidence region to obtain the simultaneous confidence intervals. 
Numerical tests with singular covariance matrices are conducted in \cite{lam.lu:aac}. Lastly, we mention that the only requirement on  $\Sigma_n$ for Theorems \ref{t:zN_weakconv_nonsinuglar} and \ref{t:zN_weakconv_sinuglar} to hold is its almost sure convergence to $\Sigma_0$. Hence, any estimator of $\Sigma_0$ that converges to it almost surely can be used as $\Sigma_n$, not necessarily the sample covariance matrix.

\section{Individual confidence intervals}\label{s:ind_ci}

This section discusses computation of individual confidence intervals for $z_0$.
An individual confidence interval for the $j$th component of $z_0$ is an interval in $\Rset$ that contains $(z_0)_j$ with a prescribed level of confidence. It is generally narrower than the simultaneous confidence interval for the same level. The difference between widths of the two intervals are substantial in problems with moderate or large dimensions. In such problems, the minimum enclosing box of a confidence region can be much larger than the region itself, resulting in simultaneous confidence intervals too wide to be useful. The sizes of individual confidence intervals are less affected by the dimension of the problem. There are also problems where only confidence intervals of selected components in the true solution are of interest.

Computation of individual confidence intervals requires knowledge on the distribution of each individual component of $z_n$. As shown in \eqref{eqn:zN_dist_cov}, $\sqrt{n}(z_n-z_0)$ weakly converges to the distribution of a random variable $\Gamma=(L_K) ^{-1} (Y_0)$. Given results in the preceding two sections, it is natural to ask whether we can substitute $L_K=d (f_0)_S (z_0)$ with $d(f_n)_S(z_n)$ in computing individual confidence intervals for $z_0$. Such a direct substitution results in a confidence interval in \eqref{eqn:ind_ci_conv_formula}, where $r_{nj}$ is defined in \eqref{eqn:def_rNj}. The equation \eqref{eqn:ind_ci_conv_general} expresses the limiting probability for such an interval to contain $(z_0)_j$ in terms of  $\Gamma$. That limiting probability equals the desired confidence level $1-\alpha$ when the condition in \eqref{eqn:prob_intersection} holds.

Let $\mP(z_0)=\{P_1,\cdots,P_k\}$ be the set of $q$-cells in the normal manifold of $S$ that contains $z_0$. For each $i=1,\cdots,k$, let $K_i=\cone(P_i-z_0)$, $A_i$ be the matrix representing $d\Pi_S(z)$ for points $z$ in the interior of $P_i$, and $T_i=d (f_0)_S (z_0) (K_i)$ be the image of $K_i$ under the function $d(f_0)_S(z_0)$. From \eqref{eqn:d_f0_S_h} we can see that $d (f_0)_S(z_0)$ coincides with
$d L_S(z_0)$, where $L_S$ is the normal map induced by the linear operator $L$ and the set $S$. On the cone $K_i$, the map $d(f_0)_S(z_0)$  is represented by the matrix
\[
 M_i = L A_i + I - A_i.
 \]
Under Assumption \ref{assu2} the map $d(f_0)_S(z_0)$ is a global homeomorphism, so all the matrices $M_i, i=1,\cdots, k$ are nonsingular.

As above let $\Gamma=(L_K) ^{-1} (Y_0)$, and for each $i=1,\cdots,k$ define a random variable $\Gamma^i=M_i ^{-1} (Y_0)$. The fact that $Y_0$ is a multivariant normal random variable implies that each $\Gamma^i$ is a multivariant normal random variable with covariance matrix $M_i^{-1} \Sigma_0 M_i^{-T}$. Accordingly, each component of $\Gamma^i$ is a normal random variable. If we define a number
 \[
 r^i_j = \sqrt{(M_i^{-1} \Sigma_0 M_i^{-T})_{jj}}
 \]
for each $i=1,\cdots,k$ and $j=1,\cdots,q$, then for each real number $\alpha\in (0,1)$ we have
 \begin{equation}\label{eqn:r_ij_prob}
 \Prob\left(|\Gamma^i_j|\le \sqrt{\chi^2_1(\alpha)} r^i_j\right) = 1-\alpha.
 \end{equation}

Let us discuss a relation between $\Gamma$ and $\Gamma^i$. Since $d(f_0)_S(z_0)$ is represented by the matrix $M_i$ on the cone $K_i$, we have
\[
d (f_0)_S (z_0) ^{-1} (y)= M_i^{-1}(y) \text{ if } y\in T_i.
\]
For each measurable set $W \subset K_i$ we have
\begin{equation}\label{eqn:prob_gamma_W}
\Prob(\Gamma \in W)= \Prob(Y_0\in M_i (W)) = \Prob(\Gamma^i \in W).
\end{equation}
%
%

\begin{theorem}\label{t:ind_ci}
Suppose that Assumptions \ref{assu1} and \ref{assu2} hold, and suppose that $\Sigma_0$ has a positive determinant. Let $P_i, K_i, T_i, A_i, M_i, \Gamma^i, \Gamma, r^i_j$ be defined as above.
For each integer $n$ and each $j=1,\cdots,q$, define a number
\begin{equation}\label{eqn:def_rNj}
r_{nj} =
\left\{
\begin{array}{ll}
\sqrt{(d(f_n)_S(z_n)^{-1} \Sigma_n d(f_n)_S(z_n)^{-T})_{jj}} & \text{ if } d(f_n)_S(z_n) \text{ is an invertible linear map,}\\
0 & \text{ otherwise.}
\end{array}
\right.
\end{equation}
 Then for each real number $\alpha \in (0,1)$ and for each $j=1,\cdots,q$,
 \begin{equation}\label{eqn:ind_ci_conv_general}
\begin{split}
&\lim_{n\to\infty} \Prob\left(\frac{\sqrt{n}|(z_n-z_0)_j|}{r_{nj}}\le \sqrt{\chi^2_1(\alpha)}\right)\\
=&\sum_{i=1}^k \Prob\left(\big|\frac{\Gamma^i_j}{r^i_j}\big|\le \sqrt{\chi^2_1(\alpha)}\text{ and } \Gamma^i\in K_i\right)
=\sum_{i=1}^k \Prob\left(\big|\frac{\Gamma_j}{r^i_j}\big|\le \sqrt{\chi^2_1(\alpha)}\text{ and } \Gamma\in K_i\right).
\end{split}
\end{equation}
Moreover, suppose for a given $j=1,\cdots,q$ that the following equality
\begin{equation}\label{eqn:prob_intersection}
\Prob \left(\big|\frac{\Gamma^i_j}{r^i_j}\big|\le \sqrt{\chi^2_1(\alpha)}\text{ and } \Gamma^i\in K_i\right) = \Prob \left(\big|\frac{\Gamma^i_j}{r^i_j}\big|\le \sqrt{\chi^2_1(\alpha)}\right) \Prob (\Gamma^i\in K_i)
\end{equation}
holds for each $i=1,\cdots,k$. Then for each real number $\alpha \in (0,1)$,
\begin{equation}\label{eqn:ind_ci_conv}
\lim_{n\to \infty} \Prob\left\{|(z_n-z_0)_j|\le \frac{\sqrt{\chi^2_1(\alpha)} r_{nj}}{\sqrt{n}}\right\} = 1-\alpha.
\end{equation}
\end{theorem}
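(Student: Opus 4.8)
The plan is to decompose the event $\{\sqrt{n}|(z_n-z_0)_j| \le \sqrt{\chi^2_1(\alpha)}\, r_{nj}\}$ according to which $q$-cell $z_n$ lies in, pass to the limit using Theorem~\ref{t:d_f0_S_d_fN_S} and Proposition~\ref{p:invertible}, and then invoke \eqref{eqn:prob_gamma_W} to rewrite the limit in terms of $\Gamma^i$. First I would note that by Proposition~\ref{p:invertible} the event that $d(f_n)_S(z_n)$ fails to be an invertible linear map has probability $\to 0$, so $r_{nj}$ is positive with probability $\to 1$ and the ``otherwise'' branch of \eqref{eqn:def_rNj} can be ignored asymptotically. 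On the complementary event, the quantity $\sqrt{n}(z_n - z_0)_j / r_{nj}$ equals the $j$th component of $\sqrt{n}\, d(f_n)_S(z_n)^{-1} [\text{something}]$ normalized by the corresponding diagonal of its asymptotic covariance; more precisely I would set $V_n := -\sqrt{n}\, d(f_n)_S(z_n)(z_0 - z_n)$, which by \eqref{eqn:zN_dist_cov_dfN} satisfies $V_n \Rightarrow Y_0$, and observe that on the invertibility event $\sqrt{n}(z_n - z_0) = d(f_n)_S(z_n)^{-1} V_n$. Combined with Lemma~\ref{l:Sigma_n_Sigma_0} ($\Sigma_n \to \Sigma_0$ a.s.) and the fact that $d(f_n)_S(z_n)$ is eventually a selection matrix $M_i$ among the finitely many $M_1,\dots,M_k$ (since by \eqref{eqn:prob_R} we have $z_n \notin R$ with probability $\to 1$, and by $z_n \to z_0$ a.s. we have $z_n$ in some $P_i \in \mP(z_0)$), I would write the probability in \eqref{eqn:ind_ci_conv_general} as a finite sum over $i = 1,\dots,k$ of $\Prob(|(\sqrt{n}(z_n-z_0))_j| \le \sqrt{\chi^2_1(\alpha)}\, r_{nj},\ z_n \in \Int P_i) + o(1)$.

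For a fixed $i$, on the event $\{z_n \in \Int P_i\}$ we have $d(f_n)_S(z_n)$ linear with matrix $d f_n(x_n) A_i + I - A_i$, which converges a.s. to $M_i = L A_i + I - A_i$, and hence $r_{nj} \to r^i_j$ a.s. on that event, while $\sqrt{n}(z_n-z_0) = (d f_n(x_n)A_i + I - A_i)^{-1} V_n$. The next step is a joint convergence argument: I would show that the pair $\big(\mathbf{1}\{z_n \in \Int P_i\},\ \sqrt{n}(z_n-z_0)\big)$ behaves asymptotically like $\big(\mathbf{1}\{\Gamma \in K_i\},\ \Gamma\big)$ — here the key point is that $z_n \in \Int P_i$ iff $\sqrt{n}(z_n - z_0)$ lies in $\Int K_i$ (for $n$ large, using $z_0 \in \Int|\mP(z_0)|$ so that locally $P_i - z_0$ agrees with $K_i$), and $\sqrt{n}(z_n-z_0) \Rightarrow \Gamma$ by \eqref{eqn:zN_dist_cov}, with $\Prob(\Gamma \in \bd K_i) = 0$ because $\Gamma$ has a density (this is where the hypothesis $\det \Sigma_0 > 0$ enters, via $\Gamma^i = M_i^{-1}(Y_0)$ having a nondegenerate normal law and \eqref{eqn:prob_gamma_W}). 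Therefore $\Prob(|(\sqrt{n}(z_n-z_0))_j| \le \sqrt{\chi^2_1(\alpha)}\, r_{nj},\ z_n \in \Int P_i) \to \Prob(|\Gamma_j| \le \sqrt{\chi^2_1(\alpha)}\, r^i_j,\ \Gamma \in K_i)$, which is the second expression in \eqref{eqn:ind_ci_conv_general}; applying \eqref{eqn:prob_gamma_W} with $W$ the relevant subset of $K_i$ converts $\Gamma$ into $\Gamma^i$, giving the first expression. This establishes \eqref{eqn:ind_ci_conv_general}.

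Finally, for the moreover part: assuming \eqref{eqn:prob_intersection} for the given $j$, the right side of \eqref{eqn:ind_ci_conv_general} becomes $\sum_{i=1}^k \Prob(|\Gamma^i_j/r^i_j| \le \sqrt{\chi^2_1(\alpha)})\,\Prob(\Gamma^i \in K_i)$. By \eqref{eqn:r_ij_prob} each factor $\Prob(|\Gamma^i_j/r^i_j| \le \sqrt{\chi^2_1(\alpha)}) = 1-\alpha$, and by \eqref{eqn:prob_gamma_W} with $W = K_i$ we have $\Prob(\Gamma^i \in K_i) = \Prob(\Gamma \in K_i)$, so the sum equals $(1-\alpha)\sum_{i=1}^k \Prob(\Gamma \in K_i) = (1-\alpha)\,\Prob(\Gamma \in \bigcup_i K_i) = 1-\alpha$, since $\bigcup_{i=1}^k K_i = |\mP'(z_0)| = \Rset^q$ and the $K_i$ overlap only on their boundaries, which are $\Gamma$-null. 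This yields \eqref{eqn:ind_ci_conv}.

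\textbf{Main obstacle.} The delicate step is the joint-limit argument in the second paragraph: one must handle the interaction between the \emph{indicator} $\mathbf{1}\{z_n \in \Int P_i\}$ (a discontinuous functional) and the \emph{continuous} functional $\sqrt{n}(z_n-z_0) \mapsto$ (its normalized $j$th coordinate under the matrix $d f_n(x_n)A_i + I - A_i$), all while the normalizing matrix is itself random and converging. The clean way is to reduce everything, on the event $\{z_n \in \Int P_i\}$, to the single weakly convergent sequence $\sqrt{n}(z_n - z_0) \Rightarrow \Gamma$ composed with the fixed map $h \mapsto \big(|(M_i + o(1))^{-1}_{\text{row }j} V\!\cdot\!|,\ h \in \Int K_i\big)$ and appeal to a Slutsky-type argument plus the continuity set condition $\Prob(\Gamma \in \bd(\text{relevant region})) = 0$; keeping track of the a.s.-negligible events (non-invertibility, $z_n \in R$, $z_n \notin$ any $P_i \in \mP(z_0)$) and the a.s. convergences ($d f_n(x_n) \to L$, $\Sigma_n \to \Sigma_0$, hence $r_{nj} \to r^i_j$ on the $i$th event) so that they do not disturb the weak limit is the part requiring care.
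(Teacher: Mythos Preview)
Your outline is correct and matches the paper's proof almost step for step: decompose by the cell $\Int P_i$ containing $z_n$, use the local identification $z_n\in\Int P_i \Leftrightarrow \sqrt{n}(z_n-z_0)\in\Int K_i$ in a neighborhood $Z'$ of $z_0$, invoke $\sqrt{n}(z_n-z_0)\Rightarrow\Gamma$ together with $\Prob(\Gamma\in\bd K_i)=0$, and then apply \eqref{eqn:prob_gamma_W}; the ``moreover'' part is handled exactly as you say. The one place where the paper is more explicit is precisely your stated obstacle: because $r_{nj}$ does \emph{not} converge globally to a single $r^i_j$ (it converges to different limits on different cells), a plain Slutsky argument does not apply directly, and the paper instead introduces auxiliary variables $\hat r^i_{nj}=r_{nj}\mathbf{1}_{\{z_n\in Z'\cap\Int P_i\}}+r^i_j\mathbf{1}_{\{z_n\notin Z'\cap\Int P_i\}}$ and $h^{ij}_n=\sqrt{n}(z_n-z_0)\mathbf{1}_{\{z_n\in Z'\cap\Int P_i\}}+\bar h^{ij}\mathbf{1}_{\{z_n\notin Z'\cap\Int P_i\}}$ (with $\bar h^{ij}\notin K_i$ and $|\bar h^{ij}_j|>r^i_j\sqrt{\chi^2_1(\alpha)}$), so that $\hat r^i_{nj}\to r^i_j$ in probability \emph{unconditionally} and $h^{ij}_n\Rightarrow\hat\Gamma^{ij}$ as ordinary random vectors, after which Slutsky applies cleanly and the dummy value $\bar h^{ij}$ drops out by design.
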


\begin{proof}
Let $\alpha \in (0,1)$ be fixed.
Recall from Lemma \ref{l:Sigma_n_Sigma_0} that $\Sigma_n$ converges to $\Sigma_0$ almost surely, and from the proof of Theorem \ref{t:d_f0_S_d_fN_S} that $d f_n(x_n)$ converges to $L=d f_0(x_0)$ almost surely. Let $Z'$ be a neighborhood of $z_0$ in the interior of $|\mP(z_0)|$ with $Z'\cap P_i = Z'\cap (z_0+K_i)$ for each $i=1,\cdots,k$. We have
\begin{equation}\label{eqn:prob_zN_Z'}
\lim_{n\to \infty} \Prob(z_n\in Z')=1.
\end{equation}
 Writing $Z'$ as the following union,
\[Z'=\cup_{i=1}^k (Z'\cap (z_0+\Int K_i)) \cup (Z'\cap (z_0+\Rset^q/\cup_{i=1}^k \Int K_i)),\]
and noting from \eqref{eqn:prob_R} in the proof of Proposition \ref{p:invertible} that
\[\lim_{n\to\infty}\Prob(z_n\in Z'\cap (z_0+\Rset^q/\cup_{i=1}^k \Int K_i))=0,\]
we find
\begin{equation}\label{eqn:prob_zN_intKi}
\lim_{n\to \infty} \sum_{i=1}^k \Prob(z_n\in Z'\cap (z_0+\Int K_i)) =1.
\end{equation}
It is not hard to see $Z'\cap (z_0+\Int K_i) = Z' \cap \Int P_i$ for each $i=1,\cdots,k$. When $z_n \in Z'\cap \Int P_i$, $d (f_n)_S(z_n)$ is a linear map given by
\[
d (f_n)_S (z_n) = d f_n (x_n) A_i + I - A_i.
\]

For each integer $n$, $j=1,\cdots,q$ and $i=1,\cdots,k$, define a quantity
\begin{equation}\label{eqn:def_hatr}
\hat{r}^i_{nj}=r_{nj} 1_{z_n \in Z'\cap \Int P_i} + r^i_j 1_{z_n \not\in Z'\cap \Int P_i}.
\end{equation}
By Proposition \ref{p:invertible}, and from the fact that $d f_n(x_n)$ and $\Sigma_n$ converge to $d f_0(x_0)$ and $\Sigma_0$ almost surely respectively, $\hat{r}^i_{nj}$ converges to $r^i_j$ in probability as $n\to \infty$.

Now, let us first consider the situations in which $k\ge 2$. These are the situations in which $z_0$ lies on the boundary of some $q$-cell.
For each $i=1,\cdots,k$ and $j=1,\cdots,q$, choose an $q$-dimensional vector $\bar{h}^{ij}$ such that $\bar{h}^{ij}$ does not belong to $K_i$ and its $j$th component $\bar{h}^{ij}_j$ satisfies $|\bar{h}^{ij}_j|>r^i_j \sqrt{\chi^2_1(\alpha)}$. Define a random variable $h^{ij}_{n}\in \Rset^q$ by
\begin{equation}\label{eqn:def_h_ijN}
h^{ij}_{n}=\sqrt{n}(z_n-z_0)1_{z_n \in Z'\cap \Int P_i} + \bar{h}^{ij} 1_{z_n \not \in Z'\cap \Int P_i},
\end{equation}
and define another $q$-dimensional random variable $\hat{\Gamma}^{ij}$ by
\begin{equation}\label{eqn:def_Gamma_ij}
\hat{\Gamma}^{ij} = \Gamma^i 1_{\Gamma^i\in \Int K_i} + \bar{h}^{ij} 1_{\Gamma^i \not\in \Int K_i}.
\end{equation}
Let $W$ be a measurable subset of $\Int K_i$ with $\Prob(\Gamma\in \partial W)=0$, where $\partial W$ stands for the boundary of $W$. The above definition of $h^{ij}_n$ and the fact that $\bar{h}^{ij}$ does not belong to $K_i$ imply
\[
\Prob(h^{ij}_n \in W)=\Prob(\sqrt{n}(z_n-z_0) \in W \text{ and } z_n \in Z'\cap \Int P_i).
\]
Recalling that $Z'\cap \Int P_i = Z' \cap (z_0+\Int K_i)$ for each $i=1,\cdots,k$, we find
\[
\Prob(\sqrt{n}(z_n-z_0) \in W \text{ and } z_n \in Z'\cap \Int P_i)=\Prob(\sqrt{n}(z_n-z_0) \in W \text{ and } z_n \in Z').
\]
Combining the above two equalities with \eqref{eqn:prob_zN_Z'}, we have
\[
\lim_{n\to\infty} \Prob(h^{ij}_n \in W)=\lim_{n\to\infty} \Prob(\sqrt{n}(z_n-z_0) \in W).
\]
By the asymptotic distribution \eqref{eqn:zN_dist_cov}, the definition of $\Gamma$ and equation \eqref{eqn:prob_gamma_W}, we have
\[\lim_{n\to\infty} \Prob(\sqrt{n}(z_n-z_0) \in W)
=\Prob(\Gamma\in W)=\Prob(\Gamma^i\in W).\]
From the definition of $\hat{\Gamma}^{ij}$ in \eqref{eqn:def_Gamma_ij} and the facts $W\subset \Int K_i$ and $\bar{h}^{ij}\not\in K_i$ we have
\[
\Prob(\Gamma^i\in W)=\Prob(\hat{\Gamma}^{ij}\in W).
\]
Combining the above three equalities together, we find
\[
\lim_{n\to\infty} \Prob(h^{ij}_n \in W)=\Prob(\hat{\Gamma}^{ij}\in W)\]
for each measurable set $W\subset \Int K_i$ with $\Prob(\Gamma\in \partial W)=0$.  Because the set $\Int K_i$ itself satisfies $\Prob(\Gamma\in \partial (\Int K_i))=0$, the above equality holds with $\Int K_i$ in place of $W$. Since $h^{ij}_n$ and $\hat{\Gamma}^{ij}$ only take values in $\Int K_i \cup \{\bar{h}^{ij}\}$, we have $\lim_{n\to\infty} \Prob(h^{ij}_n = \bar{h}^{ij})=\Prob(\hat{\Gamma}^{ij}=\bar{h}^{ij})$. Also, for each $W\subset \Int K_i$ we have $\Prob(\Gamma\in \partial W)=\Prob(\hat{\Gamma}^{ij}\in \partial W)$. It is not hard to see
\[h^{ij}_n \Rightarrow \hat{\Gamma}^{ij}.\] Since $\hat{r}^i_{nj}$ converges in probability to the fixed number $r^i_j$, which is strictly positive under the assumption in this theorem about $\Sigma_0$, we have
\[
\frac{(h^{ij}_n)_j}{\hat{r}^i_{nj}}\Rightarrow \frac{\hat{\Gamma}^{ij}_j}{r^i_j},
\]
where $(h^{ij}_n)_j$ and $\hat{\Gamma}^{ij}_j$ are the $j$th components of $(h^{ij}_n)$ and $\hat{\Gamma}^{ij}$ respectively.
It follows that
\begin{equation}\label{eqn:h_Nij_hatGamma}
\lim_{n\to\infty} \Prob\left(\big|\frac{(h^{ij}_n)_j}{\hat{r}^i_{nj}}\big|\le \sqrt{\chi^2_1(\alpha)}\right) = \Prob\left(\big|\frac{\hat{\Gamma}^{ij}_j}{r^i_j}\big|\le \sqrt{\chi^2_1(\alpha)}\right),
\end{equation}
because the probability for $\frac{\hat{\Gamma}^{ij}_j}{r^i_j}$ to lie on the boundary of $[-\sqrt{\chi^2_1(\alpha)},\sqrt{\chi^2_1(\alpha)}]$ is zero.
The way $\hat{\Gamma}^{ij}$ is defined in \eqref{eqn:def_Gamma_ij} and the fact that $|\bar{h}^{ij}_j|>r^i_j \sqrt{\chi^2_1(\alpha)}$ imply
\begin{equation}\label{eqn:hatGamma_Gamma}
\Prob\left(\big|\frac{\hat{\Gamma}^{ij}_j}{r^i_j}\big|\le \sqrt{\chi^2_1(\alpha)}\right) = \Prob\left(\big|\frac{\Gamma^i_j}{r^i_j}\big|\le \sqrt{\chi^2_1(\alpha)}\text{ and } \Gamma^i\in \Int K_i\right).
\end{equation}
The facts that $|\bar{h}^{ij}_j|>r^i_j \sqrt{\chi^2_1(\alpha)}$ and
that $\hat{r}^i_{nj}$ almost surely converges to $r^i_j$
imply
\[\lim_{n\to\infty} \Prob\left(\big|\frac{\bar{h}^{ij}_j}{\hat{r}^i_{nj}}\big|\le \sqrt{\chi^2_1(\alpha)}\right)=0.\]

We are now ready to put all pieces together to prove \eqref{eqn:ind_ci_conv_general} for the case $k\ge 2$. By the definition of $h^{ij}_n$ in \eqref{eqn:def_h_ijN} and the above equality, we have
\begin{equation}\label{eqn:h_Nij_zN}
\begin{split}
&\lim_{n\to\infty} \Prob\left(\big|\frac{(h^{ij}_n)_j}{\hat{r}^i_{nj}}\big|\le \sqrt{\chi^2_1(\alpha)}\right)
\\
=&\lim_{n\to\infty} \Prob\left(\frac{\sqrt{n}|(z_n-z_0)_j|}{\hat{r}^i_{nj}}\le \sqrt{\chi^2_1(\alpha)} \text{ and } z_n \in Z'\cap \Int P_i\right).
\end{split}
\end{equation}
By the definition of $\hat{r}^i_{nj}$ in \eqref{eqn:def_hatr}, we can replace it by $r_{nj}$ in the right hand side of \eqref{eqn:h_Nij_zN}.
Combining \eqref{eqn:h_Nij_hatGamma}, \eqref{eqn:hatGamma_Gamma} and \eqref{eqn:h_Nij_zN} together, we have
\begin{equation}
\begin{split}
&\lim_{n\to\infty} \Prob\left(\frac{\sqrt{n}|(z_n-z_0)_j|}{r_{nj}}\le \sqrt{\chi^2_1(\alpha)} \text{ and } z_n \in Z'\cap \Int P_i\right)\\
=&\Prob\left(\big|\frac{\Gamma^i_j}{r^i_j}\big|\le \sqrt{\chi^2_1(\alpha)}\text{ and } \Gamma^i\in \Int K_i\right)\\
=&\Prob\left(\big|\frac{\Gamma^i_j}{r^i_j}\big|\le \sqrt{\chi^2_1(\alpha)}\text{ and } \Gamma^i\in K_i\right)
\end{split}
\end{equation}
where the second equality follows from the fact that the probability for $\Gamma^i$ to belong to the boundary of $K_i$ is 0.
Recalling that $Z'\cap (z_0+\Int K_i)=Z'\cap \Int P_i$, we can combine the above equality with \eqref{eqn:prob_zN_intKi} to prove \eqref{eqn:ind_ci_conv_general}.

 Under the assumption \eqref{eqn:prob_intersection}, we have
\begin{equation}\label{eqn:sum_Prob_Gamma_Ki}
\begin{split}
&\sum_{i=1}^k \Prob\left(\big|\frac{\Gamma^i_j}{r^i_j}\big|\le \sqrt{\chi^2_1(\alpha)} \text{ and } \Gamma^i\in K_i\right)\\
=&\sum_{i=1}^k \Prob\left(\big|\frac{\Gamma^i_j}{r^i_j}\big|\le \sqrt{\chi^2_1(\alpha)}\right)\Prob\left( \Gamma^i\in K_i\right)\\
=&\sum_{i=1}^k \Prob\left(\big|\frac{\Gamma^i_j}{r^i_j}\big|\le \sqrt{\chi^2_1(\alpha)}\right)\Prob\left( \Gamma\in K_i\right) = 1-\alpha,
\end{split}
\end{equation}
where the third equality uses \eqref{eqn:prob_gamma_W} and the fourth equality uses \eqref{eqn:r_ij_prob} and the fact that $\sum_{i=1}^k \Prob\left( \Gamma\in K_i\right) =1$. This proves \eqref{eqn:ind_ci_conv} for the case $k\ge 2$.

The case $k=1$ is much simpler. In this case, $z_0$ lies in the interior of a single $q$-cell $P_1$, $K_1=\Rset^q$, $d(f_0)_S(z_0)$ is an invertible linear map, and $\Gamma$ is equal to $\Gamma^1$. Since $z_n$ belongs to the interior of $P_1$ for all sufficiently large $n$, the number $r_{nj}$ converges almost surely to $r^1_j$ for each $j=1,\cdots,q$. Equation \eqref{eqn:ind_ci_conv_general} follows from the fact that
\[
\frac{\sqrt{n}(z_n-z_0)_j}{r_{nj}} \Rightarrow \frac{\Gamma^1_j}{r^1_j},
\]
and equation \eqref{eqn:ind_ci_conv} follows from the equality \eqref{eqn:r_ij_prob} with $i=1$.
\end{proof}

Below, we discuss two situations in which the equality \eqref{eqn:prob_intersection} holds.

The first situation is when $k\le 2$. Obviously, when $k=1$ (that is, when $z_0$ lies in the interior of an $q$-cell in the normal manifold of $S$), the cone $K_1$ is the entire space $\Rset^q$, and the assumption \eqref{eqn:prob_intersection} automatically holds. It was noted by Michael Lamm that \eqref{eqn:prob_intersection} also holds when $k=2$. In the latter case, the cone $K_i$ is a half-space for $i=1,2$. Since $\Gamma^i$ is a multivariant normal random variable with mean zero, $-\Gamma^i$ and $\Gamma^i$ have the same distribution. It follows that $\Prob \left(\Gamma^i\in K_i\right)=1/2$, and that $\Prob \left(\big|\frac{\Gamma^i_j}{r^i_j}\big|\le \sqrt{\chi^2_1(\alpha)}\text{ and } \Gamma^i\in K_i\right)$ and $\Prob \left(\big|\frac{\Gamma^i_j}{r^i_j}\big|\le \sqrt{\chi^2_1(\alpha)}\text{ and } -\Gamma^i\in K_i\right)$ are both equal to $\frac{1}{2}\Prob \left(\big|\frac{\Gamma^i_j}{r^i_j}\big|\le \sqrt{\chi^2_1(\alpha)}\right)$. This proves \eqref{eqn:prob_intersection} when $k=2$.

The second situation is when $S$ is a box and each $\Gamma^i$ has a diagonal covariance matrix. In this case, each $K_i$ is of the form $\{h\in \Rset^q \mid h_i\ge 0 \text{ for each }i\in I_+, h_i\le 0 \text{ for each }i\in I_-\}$ for some disjoint subsets $I_+$ and $I_-$ of $\{1,\cdots,q\}$. Since each $\Gamma^i$ has a diagonal covariance matrix, its components $\Gamma^i_j$ are independent of each other. From such independence, and by the symmetry of a mean-0 normal random variable in $\Rset$ with respect to the origin, it is not hard to see that \eqref{eqn:prob_intersection} holds for every $j$ and $i$.

Whenever \eqref{eqn:prob_intersection} holds, we have \eqref{eqn:ind_ci_conv}, which means that the interval
\begin{equation}\label{eqn:ind_ci_conv_formula}
\left[(z_n)_j - \frac{\sqrt{\chi^2_1(\alpha)} r_{nj}}{\sqrt{n}},
(z_n)_j + \frac{\sqrt{\chi^2_1(\alpha)} r_{nj}}{\sqrt{n}}\right]
\end{equation}
is an asymptotically exact $(1-\alpha)$ confidence interval for $(z_0)_j$.

The numerical examples in Section \ref{s:num} with $z_0=0$ do not belong to either of the above two cases, yet the coverage rates of individual confidence intervals obtained from this method are still reasonable. From those examples we observe that even if the difference between the two sides of \eqref{eqn:prob_intersection} is large for some $i$, the difference tends to be reduced after the summation over all $i$'s. As a result, the quantity
\[
\sum_{i=1}^k \Prob\left(\big|\frac{\Gamma^i_j}{r^i_j}\big|\le \sqrt{\chi^2_1(\alpha)}\text{ and } \Gamma^i\in K_i\right)
\]
may not be very far from $1-\alpha$. 
Finally, Theorem \ref{t:ind_ci} assumes that $\Sigma_0$ is nonsingular, which implies the nonsingularity of $\Sigma_n$ for large $n$. Even if $\Sigma_n$ is singular, one can still use \eqref{eqn:ind_ci_conv_formula} to compute a confidence interval, with the caution that the coverage probability for the true solution may not be close to the prescribed level of confidence if $\Sigma_n$ is very different from $\Sigma_0$.

\section{Numerical results}\label{s:num}

Before applying the proposed method to numerical examples, we summarize how to use this method in practice.

Among the assumptions, Assumption \ref{assu1} is standard. For Assumption \ref{assu2}, instead of directly checking if the normal map $L_K$ is a homeomorphism, we can check if one of its sufficient conditions holds (see the discussion below Assumption \ref{assu2}). Likewise, we can check if \eqref{eqn:prob_intersection} holds by determining if the problem belongs to one of the two case discussed below Theorem \ref{t:ind_ci}. In checking those assumptions, the asymptotical results in \cite{lu:nmb,lu.bud:crsvi} can be used to estimate $d(f_0)_S (z_0)$ and $K$.

Confidence regions of $z_0$ are given by the set \eqref{eqn:conf_reg_nonsingular} (if $\Sigma_0$ is nonsingular) or the set $R_{n,\epsilon}$ in \eqref{eqn:R_n_epsilon} (if $\Sigma_0$ is singular), and the latter set can be approximated by $R_{n,0}$. According to Proposition \ref{p:invertible}, sets \eqref{eqn:conf_reg_nonsingular} and $R_{n,0}$ are ellipsoids with high probability. Computation of simultaneous confidence intervals for $z_0$ is done by finding the minimal bounding box of its confidence region. 
Individual confidence intervals of each component of $z_0$ can be computed using the formula (5.17), provided that \eqref{eqn:prob_intersection} holds. The quantity $r_{nj}$ defined in \eqref{eqn:def_rNj} depends on $\Sigma_n$ and $d(f_n)_S(z_n)$, and the latter is a nonsingular matrix with high probability.

To convert confidence regions and intervals of $z_0$ into those of $x_0$, the key is to use the equality $x_0=\Pi_S(z_0)$. Suppose the set $A\subset \Rset^q$ is a $(1-\alpha)100\%$ confidence region for $z_0$, then the image of $A$ under the operator $\Pi_S$, denoted by $\Pi_S(A)$, contains $x_0$ with probability at least $1-\alpha$. If $S$ is a box, then one can easily project the simultaneous confidence intervals of $z_0$ to obtain simultaneous confidence intervals of $x_0$. The latter intervals are conservative, as the probability for the product of all those intervals to contain $x_0$ is at least $1-\alpha$. When $S$ is a box, each component of $x_0$ is the projection of a component of $z_0$ onto an interval (the product of all such intervals is $S$), and one can project the individual confidence intervals of $z_0$ to obtain individual confidence intervals of $x_0$. For problems in which $S$ is not a box, the above projection method would not be easy to implement in general, and one would need to exploit special structure in those problems to obtain confidence regions and intervals of $x_0$.

\vspace{4mm}

\noindent \textbf{An example with $q=2$.} Here, we apply the method to the same example used in \cite{lu:nmb,lu.bud:crsvi}. In this example, $q=2$, $d=6$, $S=\Rset^2_+$, $F:\Rset^2\times \Rset^6 \to \Rset^2$ is defined by
\begin{equation}
F (x, \xi)= \begin{bmatrix} \xi_1 & \xi_2 \\ \xi_3 & \xi_4\end{bmatrix} \begin{bmatrix}x_1 \\ x_2 \end{bmatrix} + \begin{bmatrix}\xi_5 \\ \xi_6\end{bmatrix},
\end{equation}
and the random vector $\xi$ follows the uniform distribution over the box $[0,2]\times[0,1]\times[0,2]\times[0,4]\times[-1,1]\times[-1,1]$. The true problem is
\begin{equation}\label{eqn:lcp}
0 \in \begin{bmatrix} 1 & 1/2 \\ 1 & 2\end{bmatrix} \
x + N_{\Rset^2_+} (x).
\end{equation}
The solution to \eqref{eqn:lcp} is $x_0=0$, and the solution of the corresponding normal map formulation is $z_0 = x_0 - E[F(x_0,\xi)] = 0$. The covariance matrix of $F(x_0, \xi)= (\xi_5,\xi_6)$ is given by
\[
\Sigma_0=\begin{bmatrix}
1/3 & 0 \\
0 &1/3
\end{bmatrix},
\]
 and the B-derivative $d(f_0)_{\Rset^2_+} (z_0)$ is a piecewise linear function represented by matrices
\[
\begin{bmatrix}
  1 & 1/2 \\ 1 & 2
\end{bmatrix},
\begin{bmatrix}
       1 &    0 \\
     1   &  1
\end{bmatrix},
\begin{bmatrix}
   1 &    1/2 \\
         0   & 2
\end{bmatrix} \text{ and }
\begin{bmatrix}
     1   &  0 \\
     0   &  1
\end{bmatrix}
\]
in orthants $\Rset^2_+$, $\Rset_+\times \Rset_-$, $\Rset_-\times \Rset_+$ and $\Rset^2_-$ respectively.
Accordingly, if we define random variables $\Gamma^i$ as in Section \ref{s:ind_ci}, then the covariance matrices of them are
\[
\begin{bmatrix}
  0.6296  & -0.3704 \\
   -0.3704  &  0.2963
\end{bmatrix},
\begin{bmatrix}
    0.3333 &  -0.3333\\
   -0.3333  &  0.6667
\end{bmatrix},
\begin{bmatrix}
  0.3542  & -0.0417\\
   -0.0417  &  0.0833
\end{bmatrix} \text{ and }
\begin{bmatrix}
         0.3333    &     0\\
         0  &  0.3333
\end{bmatrix}
\]
respectively.
An SAA problem with $n=10$ is given by
\[
0 \in \begin{bmatrix}
    0.9292 &   0.5400\\
    0.7536 &   2.1111\\
\end{bmatrix}
x+
\begin{bmatrix}
   -0.1319 \\   -0.2906
\end{bmatrix} + N_{\Rset^2_+}(x).
\]
The SAA solution is $x_{10}=(0.0782, 0.1097)$, $z_{10}= ( 0.0782, 0.1097)$, and the sample covariance matrix of $F(x_{10},\xi)$ is
\[
\Sigma_{10}=\begin{bmatrix}
    0.4169  &  0.0137\\
    0.0137  &  0.1865
    \end{bmatrix}.
\]
The B-derivative $d\Pi_{\Rset^2_+}(z_{10})$ is exactly the identity map on $\Rset^2$, and the B-derivative $d (f_{10})_{\Rset^2_+}(z_{10})$ is the linear map represented by the matrix
\[
\begin{bmatrix}
    0.9292 &   0.5400\\
    0.7536 &   2.1111\\
\end{bmatrix}\begin{bmatrix} 1  & 0 \\ 0 & 1 \end{bmatrix} + \begin{bmatrix} 1  & 0 \\ 0 & 1 \end{bmatrix} - \begin{bmatrix} 1  & 0 \\ 0 & 1 \end{bmatrix} = \begin{bmatrix}
    0.9292 &   0.5400\\
    0.7536 &   2.1111
\end{bmatrix}.
\]
The confidence regions in \eqref{eqn:conf_reg_nonsingular} are given by
\[
\{z\in\Rset^2\mid 10(z-z_{10})^T
\begin{bmatrix}
 4.8810  &  9.3398 \\
     9.3398 &   24.2564 \end{bmatrix} (z-z_{10})\le \chi^2_2(\alpha)
\}.
\]

Figure \ref{f:confregz0}(a) shows boundaries of the above confidence regions. The center of these regions is $z_{10}$, marked by `$\times$' in the graph. From the innermost to the outermost, the curves correspond to boundaries of confidence regions for $z_0$ at levels 10\%, $\cdots$, 90\% respectively. The point $z_0$ is marked by `+' and lies just beyond the 90\% confidence region. The dashed rectangle shown in the figure is the minimum enclosing box of the 90\% region. Figure \ref{f:confregz0}(b) shows confidence regions for $z_0$ obtained from a different SAA problem with sample size $n=30$,
$x_{30}=0$ and $z_{30}=(   -0.0483,  -0.0114)$. Table \ref{tab:ind_simu_conf} shows the 90\% simultaneous and individual confidence intervals for $z_0$ obtained from the above two SAA problems.

\begin{figure}[ht]
\caption{Confidence regions for $z_0$ in the example $q=2$ at levels 10\%, $\cdots$, 90\%}
\centering
\mbox{
\subfigure[$n=10, z_{10}\approx( 0.08, 0.11)$]{\epsfig{file=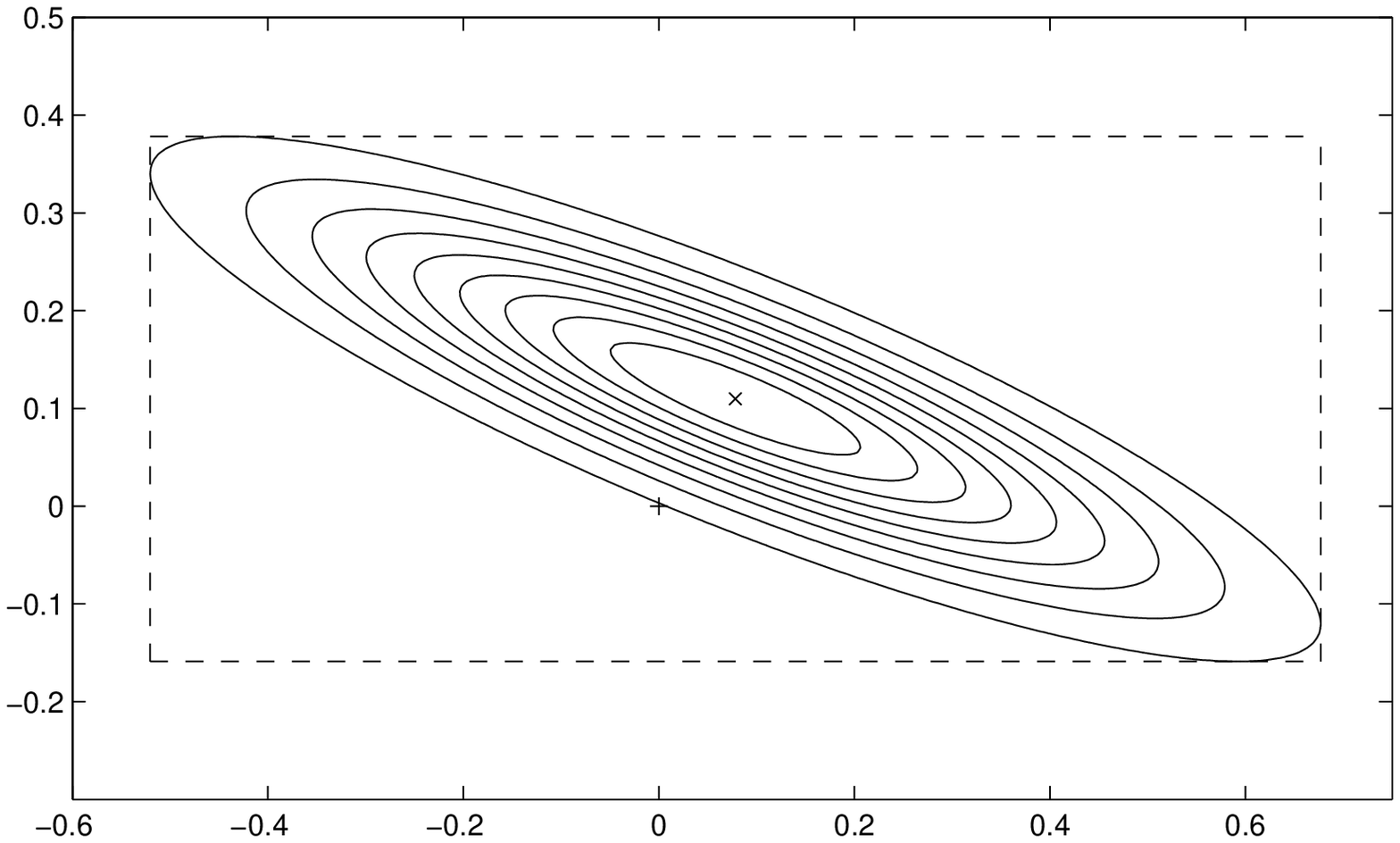,width=0.64\linewidth}}\quad
\subfigure[$n=30, z_{30}\approx( -0.05, -0.01)$]{\epsfig{file=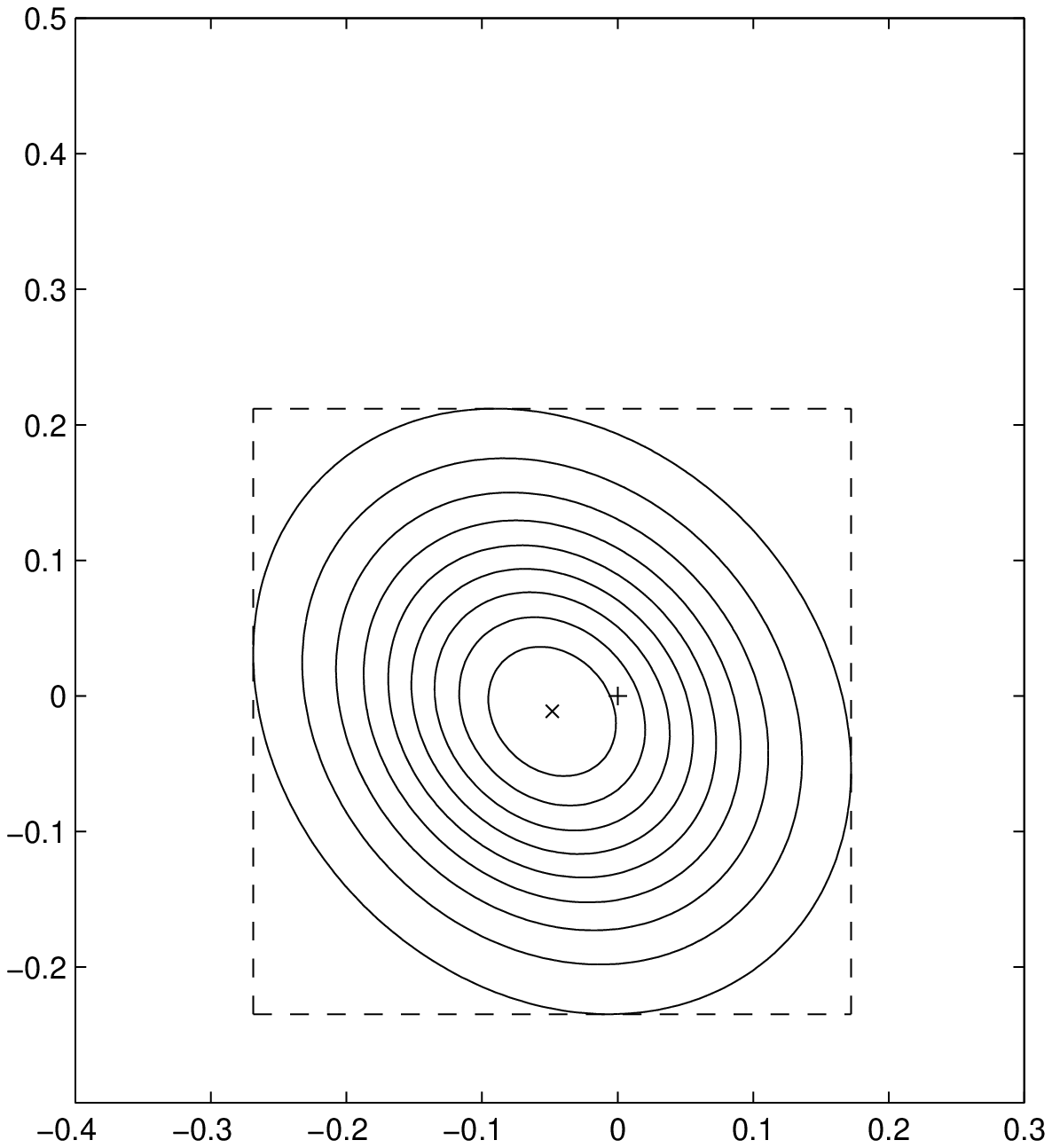,width=0.35\linewidth}}}
\label{f:confregz0}
\end{figure}

\begin{table}[ht]
\caption{Confidence intervals of level $90\%$ in the example $q=2$}
\begin{center}
\begin{tabular}{c|cll|cll}
\hline
 & \multicolumn{3}{|c|}{$n=10$} & \multicolumn{3}{|c}{$n=30$}\\
  & Est & Sim CI    &  Ind CI  & Est & Sim CI    &  Ind CI\\
\hline
$(z_0)_1$ & 0.08 & [-0.52, 0.68] & [-0.38, 0.54] & -0.05 & [-0.27, 0.17] & [-0.21, 0.12] \\
 $(z_0)_2$ & 0.11 & [-0.16, 0.38] & [-0.10, 0.32] & -0.01 & [-0.23, 0.21] & [-0.18, 0.16] \\
\hline
\end{tabular}
\end{center}
\label{tab:ind_simu_conf}
\end{table}

To test the coverage of confidence intervals obtained from the proposed method, we generate 200 SAA problems with $n=10$ and 200 SAA problems with $n=30$ from different random seeds, solve them using the PATH solver of GAMS, and compute simultaneous and individual confidence intervals for $z_0$ of levels 90\%, 95\% and 99\% from the solution for each SAA problem. We count how many times the 2-dimensional box formed by simultaneous confidence intervals cover $z_0$, and record the numbers in the row labeled by $z_0$ in Table \ref{tab:ind_conf_cov}. For example, the 90\% simultaneous confidence intervals obtained from 171 SAA problems with $n=10$ cover $z_0$ jointly. We also count how many times each component of $z_0$ is contained in the corresponding individual confidence intervals, and record the numbers in the remaining rows of Table \ref{tab:ind_conf_cov}. For example, the 90\% individual confidence intervals for $(z_0)_1$ obtained from 164 SAA problems with $n=10$ cover the true value $(z_0)_1=0$.
\begin{table}[h]
\caption{True solution coverage by confidence intervals in the example $q=2$ from 200 SAA problems}
\begin{center}
\begin{tabular}{c|ccc|ccc}
\hline
 & \multicolumn{3}{|c|}{$n=10$} & \multicolumn{3}{|c}{$n=30$}\\
     & $\alpha=$0.1 & 0.05 & 0.01  & $\alpha=$0.1 & 0.05 & 0.01  \\
\hline
 $z_0$ &  171 &  180 &  187 &  184 &  192 &  197\\ \hline
 $(z_0)_1$ &   164 &  185 &  194 &  172 &  188 &  198
 \\
 $(z_0)_2$ &    159  & 175 &  191 &    176 &  186 &   196
\\ \hline
\end{tabular}
\end{center}
\label{tab:ind_conf_cov}
\end{table}

 The proposed method generates confidence regions and simultaneous confidence intervals based on the asymptotic distribution in \eqref{eqn:dfNzN-weakconv} (or \eqref{eqn:zN_dist_cov_chi2} for singular cases). We evaluate how closely $z_n$ follow the asymptotic distribution using $\chi^2$ plots.
   We use the same SAA problems generated above for coverage tests. For each SAA problem with $n=10$ we compute the squared distance \[n \big[d(f_n)_S(z_n) (z_0 - z_n)\big]^T \Sigma_{n}^{-1} \big[d(f_n)_S(z_n) (z_0 - z_n)\big],\]
  and order these distances from smallest to largest as $d^2_{(1)}\le d^2_{(2)}\le \cdots \le d^2_{(200)}$. For each $j=1,\cdots,200$, let $q_{c,2}((j-1/2)/200)$ be the $100(j-1/2)/200$ quantile of the $\chi^2$ distribution with 2 degrees of freedom.
  We then graph the pairs $(q_{c,2}((j-1/2)/200), d^2_{(j)})$ for $j=1,\cdots, 200$ in Figure \ref{f:chi2plot}(a), in which the horizontal axis is for quantiles and the vertical axis is for squared distances. Figure \ref{f:chi2plot}(b) is obtained similarly, from the 200 SAA problems with sample size $n=30$. In each figure the points nearly follow a straight line through the origin with slope around 1, and the slope of the line in Figure \ref{f:chi2plot}(b) is closer to 1. This suggests that the expression on the left hand side of  \eqref{eqn:dfNzN-weakconv} approximately follows the standard normal distribution.

\begin{figure}
\caption{$\chi^2$ plots in the example $q=2$}
\centering
\mbox{
\subfigure[$n=10$]{\epsfig{file=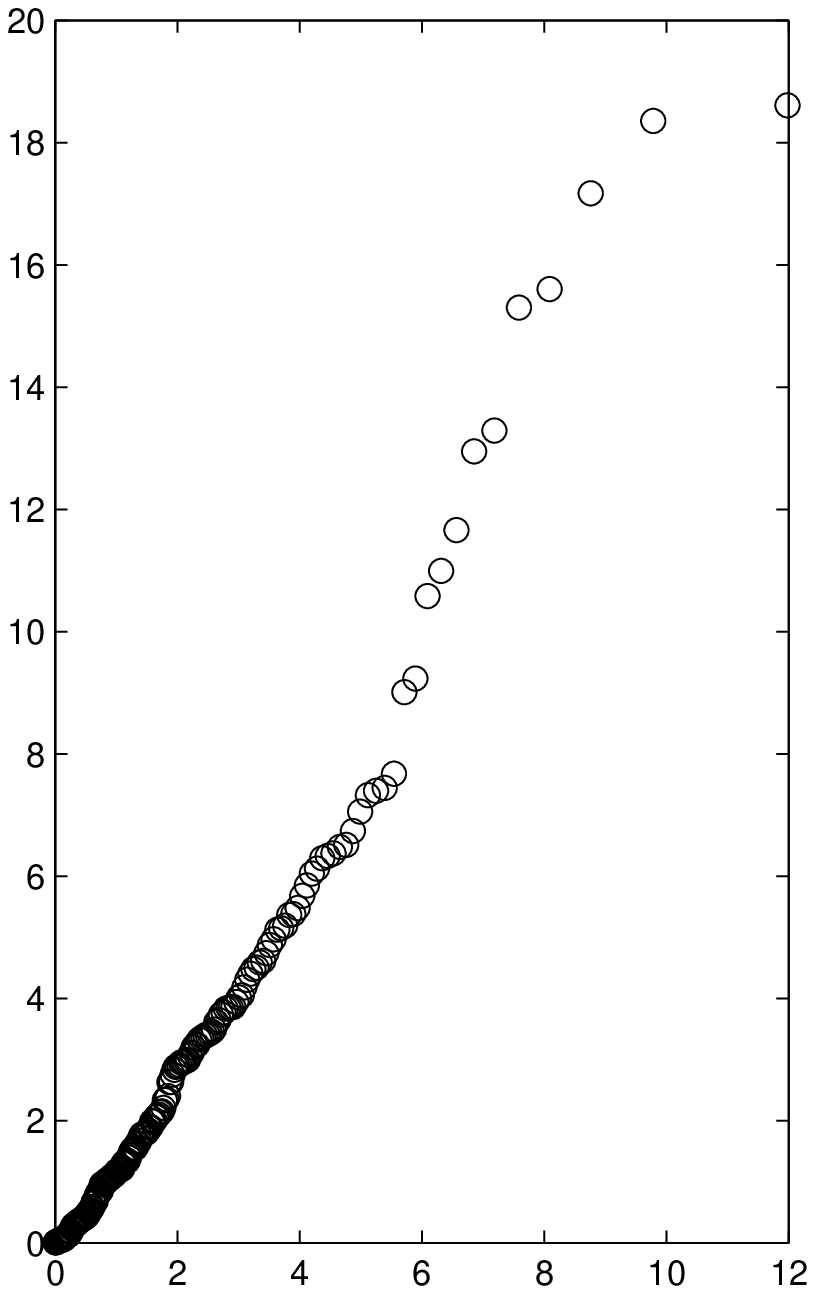,width=0.28\linewidth}}\qquad \qquad
\subfigure[$n=30$]{\epsfig{file=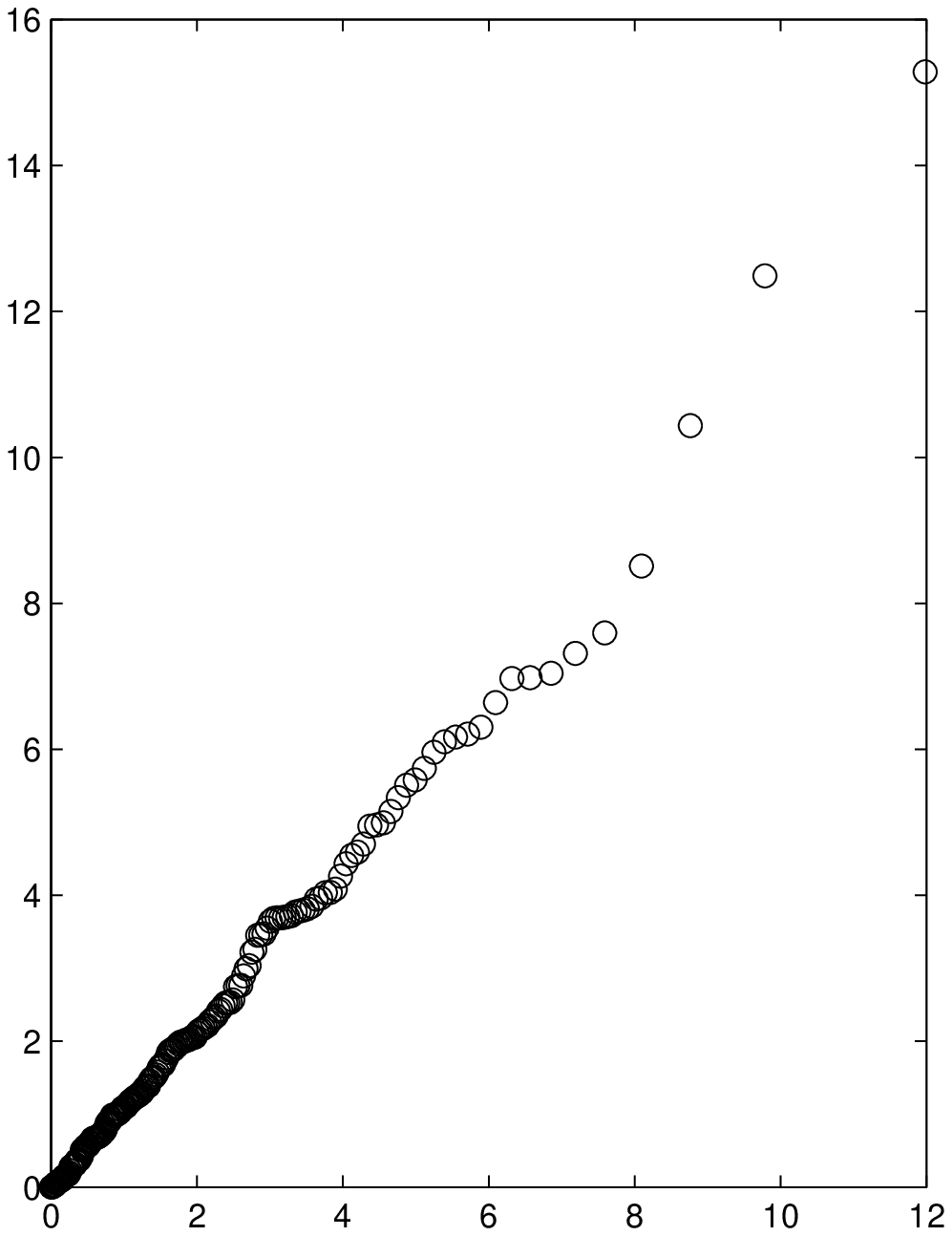,width=0.34\linewidth}}}
\label{f:chi2plot}
\end{figure}

\vspace{4mm}

\noindent \textbf{Examples with $q=10$.} We let $q=10$, $d=110$, $S=\Rset^{10}_+$, and $F:\Rset^{10}\times \Rset^{110} \to \Rset^{10}$ be defined as $F(x,\xi)=\Lambda(\xi) x + b(\xi)$, where $\Lambda(\xi)$ is a $10\times 10$ matrix whose entries are the first 100 components of $\xi$, and $b(\xi)\in \Rset^{10}$ consists of the last 10 components of $\xi$. Each diagonal entry of $\Lambda(\xi)$ is uniformly distributed on the interval [0,4], each entry above the main diagonal is uniformly distributed on [0,3], and each entry below the main diagonal is uniformly distributed on [0,2]. Thus, $E[\Lambda(\xi)]=\Lambda_0$ with $(\Lambda_0)_{ii}=2$, $(\Lambda_0)_{ij}=1.5$ for $i<j$ and $(\Lambda_0)_{ij}=1$ for $i>j$. We consider three different choices for the uniform distribution of $b(\xi)$, to obtain three different examples. In example 1, each component of $b(\xi)$ is uniformly distributed on [-1,1], so $E[b(\xi)]_i=0$ for each $i$. In example 2, the first five components of $b(\xi)$ are uniformly distributed on [-1,0.8] and the last five components uniformly distributed on [-1,1]. In example 3, each component of $b(\xi)$ is uniformly distributed on [-1,0.8]. The true solution $z_0$ is given by
\[
\begin{array}{l}
z_0=0\in \Rset^{10} \text{ in example 1,}\\
z_0= [0.21, \ 0.43, \ 0.85, \ 1.7,  \ 3.4, \   -6.6, \   -6.6, \   -6.6, \   -6.6, \   -6.6]^T \times 10^{-2} \text{ in example 2,}\\
z_0=[0.01,\ 0.01, \ 0.03, \ 0.05, \    0.1, \    0.21, \  0.42, \  0.83, \  1.67, \ 3.34]^T \times 10^{-2}  \text{ in example 3.}
\end{array}
\]
For each example, we generate 200 SAA problems with $n=50$, compute the SAA solutions, and obtain simultaneous and individual confidence intervals for $z_0$ of levels 90\%, 95\% and 99\% from each SAA solution. Table \ref{tab:ind_simu_conf_q10} lists the averages of the $90\%$ confidence intervals for $(z_0)_1$ and $(z_0)_{10}$ obtained from the 200 SAA problems, for each example. Table \ref{tab:example_q10} is analogous to Table \ref{tab:ind_conf_cov}. It summarizes the joint coverage of the true solution by simultaneous confidence intervals obtained from the 200 SAA problems for each example, and the coverage of $(z_0)_1$ and $(z_0)_{10}$ by the corresponding individual confidence intervals.

\begin{table}[ht]
\caption{Average $90\%$ confidence intervals for $(z_0)_1$ and $(z_0)_{10}$ in examples $q=10$ over 200 SAA problems}
\begin{center}
\begin{tabular}{c|ll|ll|ll}
\hline
 & \multicolumn{2}{|c|}{Example 1} & \multicolumn{2}{|c}{Example 2} & \multicolumn{2}{|c}{Example 3}\\
  & Sim CI    &  Ind CI  &  Sim CI    &  Ind CI & Sim CI    &  Ind CI\\
\hline
$(z_0)_1$ &  [-0.49, 0.30] & [-0.26, 0.07] & [-0.40, 0.30] & [-0.19, 0.09]  & [-0.44, 0.28] & [-0.23, 0.07]\\
$(z_0)_{10}$ & [-0.41, 0.28] & [-0.20, 0.08] & [-0.46, 0.25] & [-0.25, 0.04] & [-0.33, 0.30] & [-0.15, 0.11]\\
\hline
\end{tabular}
\end{center}
\label{tab:ind_simu_conf_q10}
\end{table}


\begin{table}[h]
\caption{True solution coverage by confidence intervals in examples with $q=10$ from 200 SAA problems}
\begin{center}
\begin{tabular}{c|ccc|ccc|ccc}
\hline
 & \multicolumn{3}{|c|}{Example 1} & \multicolumn{3}{|c}{Example 2} & \multicolumn{3}{|c}{Example 3}\\
     & $\alpha=$0.1 & 0.05 & 0.01  & $\alpha=$0.1 & 0.05 & 0.01 & $\alpha=$0.1 & 0.05 & 0.01 \\
\hline
 $z_0$ &  198 &  199 &  200 &  196 &  198 &  200 & 197 & 197 & 198\\ \hline
 $(z_0)_1$ &   156 &  178 &  194 &  172 &  186 &  192 & 164 & 179 & 195
 \\
 $(z_0)_{10}$ &    173  & 183 &  195 &    176 &  188 &   197 & 172 & 184 & 195
\\ \hline
\end{tabular}
\end{center}
\label{tab:example_q10}
\end{table}

\Appendix

\vspace{4mm}

\noindent {\bf Acknowledgments.}
 Research of the author was supported by National Science Foundation under the grant DMS-1109099. The author thanks Amarjit Budhiraja, Michael Lamm, Yufeng Liu, Stephen M. Robinson and Liang Yin for helpful discussions related to this research, and the referees and the associate editor for comments that have improved the presentation of this paper. Michael Lamm observed that \eqref{eqn:prob_intersection} holds when $k=2$ and provided the idea for the current proof of Proposition \ref{p:R_0}.

\bibliographystyle{siam}
\bibliography{lu}

\end{document}